\newtheorem{theorem}{Theorem}[section]
\newtheorem{lemma}[theorem]{Lemma}
\newtheorem{proposition}[theorem]{Proposition}
\newtheorem{corollary}[theorem]{Corollary}
\title{Imaginary quadratic fields with 2-class group of type $(2,2^\ell)$}
\date{}
\author{Adele Lopez}
\begin{document}
 \maketitle
\abstract{We prove that for any given positive integer $\ell$ there are infinitely many imaginary quadratic fields with 2-class group of type $(2,2^\ell)$, and provide a lower bound for the number of such groups with bounded discriminant for $\ell\ge2$. This work is based on a related result for cyclic 2-class groups by Dominguez, Miller and Wong, and our proof proceeds similarly. Our proof requires introducing congruence conditions into Perelli's result on Goldbach numbers represented by polynomials, which we establitish in some generality. 
  }

\section{Introduction}

Since the time of Gauss, mathematicians have been interested in imaginary quadratic fields and their ideal class groups. Gauss himself provided much of the framework for such studies with the development of his genus theory for binary quadratic forms. Later developments by R\`edei~\cite{R} and others such as Hasse~\cite{H} have given algorithms which compute the 2-class group from the discriminant of the imaginary quadratic field, which reveal much underlying structure.

However, not much work has been done in the converse direction of computing imaginary quadratic fields with a given 2-class group. Recently, Dominguez, Miller and Wong~\cite{DMW} proved that there are infinitely many imaginary quadratic fields with any given cyclic 2-class group. They determined a set of criteria that the discriminant of such a field would have to satisfy, and then used the circle method to show that there are infinitely many integers satisfying those criteria.

In their paper, Dominguez, Miller and Wong asked whether similar results could be found for other types of groups. We use the same technique to prove that for any given positive integer $\ell$,  there are infinitely many imaginary quadratic fields with a 2-class group with type $(2,2^\ell)$.

There has also been work on finding lower bounds for the number class groups of imaginary quadratic fields with elements of a given order, for example, Murty~\cite{Murty} found that for $g\ge2$: $$\left|\left\{ d\le X: \text{Cl}(-d)\text{ contains an element of order } g\right\}\right|\gg \frac{X^{\frac{1}{2}+\frac{1}{g}}}{\log^2 X}.$$ We have been able to achieve a similar lower bound for the class groups under consideration.

Also, Balog and Ono~\cite{BalogOno} have proven a similar theorem that gives certain conditions for how often the $\ell$-torsion of the ideal class group of an imaginary quadratic field is non-trivial. Their technique is similar to ours, relying on the circle method. In our case, however, we are interested in specific subgroups of the class group. 

Some results for imaginary quadratic fields with 2-class group of this type have been studied. For example, Benjamin, Lemmermyer and Snyder~\cite{BLS} proved the following. For an imaginary quadratic number field $K$, let $K^1$ be the Hilbert 2-class field of $K$. Then if the 2-class group of $K^1$ is cyclic, the 2-class group of $K$ has type $(2,2^\ell)$.

Using genus theory, and other algebraic considerations, we establish a sufficient set of criteria for an imaginary quadratic field to have 2-class group of type $(2,2^\ell)$. 
 \begin{proposition}
 If $w=3m^2$ where $m$ is an odd integer, $p_1\equiv 11 \bmod 24$, $p_2\equiv 7 \bmod 24$, and $p_1+p_2=2w^{2^{\ell-1}}$ with $\ell\ge 2$, then the 2-class group of $\mathbb Q(\sqrt{-p_1p_2})$ has type $(2,2^\ell)$.
 \end{proposition}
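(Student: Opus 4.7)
The strategy is to combine Gauss--R\'edei genus theory with a concrete ideal construction extracted from the hypothesis $p_1+p_2=2w^{2^{\ell-1}}$. Since $p_1\equiv p_2\equiv 3\pmod 4$, the discriminant of $K=\mathbb{Q}(\sqrt{-p_1p_2})$ is $D=-4p_1p_2$ and the ramified primes are $2$, $p_1$, $p_2$, so Gauss's genus theory gives $2$-rank$(\mathrm{Cl}(K))=2$. To refine this I would apply R\'edei's criterion to the three nontrivial factorizations of $D$ into coprime discriminants; using the mod $24$ congruences together with $p_1\equiv 2w^{2^{\ell-1}}\pmod{p_2}$ (which yields $\left(\tfrac{p_1}{p_2}\right)=1$, since $w^{2^{\ell-1}}$ is a square mod $p_2$ for $\ell\ge 2$ and $\left(\tfrac{2}{p_2}\right)=1$ as $p_2\equiv 7\pmod 8$), one verifies that only the factorization $\{-p_2,\,4p_1\}$ passes. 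Hence $4$-rank$(\mathrm{Cl}(K))=1$, so $\mathrm{Cl}_2(K)\cong \mathbb{Z}/2\times \mathbb{Z}/2^c$ for some $c\ge 2$.

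Next, set $k=(p_2-p_1)/2$, so that $k^2+p_1p_2=w^{2^\ell}$ yields, in $\mathcal{O}_K=\mathbb{Z}[\sqrt{-p_1p_2}]$, the factorization $(k+\sqrt{-p_1p_2})(k-\sqrt{-p_1p_2})=(w)^{2^\ell}$. Every prime $q\mid w=3m^2$ splits in $K$: for $q=3$ one has $-p_1p_2\equiv 1\pmod 3$, and for $q\mid m$ the congruence $p_1+p_2\equiv 0\pmod q$ gives $-p_1p_2\equiv p_1^2\pmod q$. The two factors are also coprime, since their difference $2\sqrt{-p_1p_2}$ has norm coprime to $w$. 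Together these force $(k+\sqrt{-p_1p_2})=\mathfrak{a}^{2^\ell}$ for an integral ideal $\mathfrak{a}$ with $N(\mathfrak{a})=w$. I would then show $[\mathfrak{a}]$ has order exactly $2^\ell$: if $\mathfrak{a}^{2^{\ell-1}}=(\beta)$ were principal with $\beta=x+y\sqrt{-p_1p_2}$, then $x^2+p_1p_2\,y^2=w^{2^{\ell-1}}=(p_1+p_2)/2$. The case $y\ne 0$ is impossible since $2p_1p_2>p_1+p_2$ (AM-GM), while $y=0$ forces $\beta=\pm 3^{2^{\ell-2}}m^{2^{\ell-1}}\in\mathbb{Z}$; but then $(\beta)$ has balanced valuations at each split pair $\mathfrak{p},\bar{\mathfrak{p}}$, contradicting the one-sided structure of $\mathfrak{a}^{2^{\ell-1}}$ forced by the coprimality above.

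To conclude $c=\ell$, it remains to show $[\mathfrak{a}]$ is independent of the class $[\mathfrak{p}_0]$ of the ramified prime above $2$ in $\mathrm{Cl}(K)/\mathrm{Cl}(K)^2$. Evaluating the three genus characters $\psi_{D_j}([\mathfrak{b}])=\left(\tfrac{D_j}{N(\mathfrak{b})}\right)$ on $[\mathfrak{a}]$ via $N(\mathfrak{a})=3m^2$ yields the triple $(-1,1,-1)$; for $[\mathfrak{p}_0]$ I first pass to a representative coprime to $D$ using $(1+\sqrt{-p_1p_2})=\mathfrak{p}_0\mathfrak{n}$ with $N(\mathfrak{n})=(1+p_1p_2)/2\equiv 3\pmod 4$, obtaining $(-1,-1,1)$. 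These triples are nontrivial and distinct in $(\mathbb{Z}/2)^3/\langle(-1,-1,-1)\rangle\cong (\mathbb{Z}/2)^2$, so they form a basis of $\mathrm{Cl}(K)/\mathrm{Cl}(K)^2$. Since $[\mathfrak{p}_0]$ is $2$-torsion and occupies the $\mathbb{Z}/2$ summand in the decomposition $\mathrm{Cl}_2(K)=\mathbb{Z}/2\times\mathbb{Z}/2^c$, the class $[\mathfrak{a}]$ must have odd second coordinate, forcing its order to equal $2^c$; together with the previous step this gives $c=\ell$. The main obstacles are the R\'edei-matrix verification in the first paragraph (which threads the mod $24$ congruences through several applications of quadratic reciprocity) and the genus-character evaluation on $[\mathfrak{p}_0]$, which requires the nonobvious choice of coprime representative via $1+\sqrt{-p_1p_2}$.
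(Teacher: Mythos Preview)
Your proposal is correct and follows essentially the same architecture as the paper's proof: construct an ideal $\mathfrak a$ of norm $w$ whose $2^\ell$-th power is principal (the Ankeny--Chowla idea), show it has order exactly $2^\ell$, and then use genus theory to verify that the 2-class group has the shape $(2,2^\ell)$ rather than something larger. The differences are in packaging rather than substance.

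Where you invoke R\'edei's matrix and genus characters $\psi_{D_j}([\mathfrak b])=\bigl(\tfrac{D_j}{N(\mathfrak b)}\bigr)$, the paper uses Hasse's fundamental criterion via Hilbert symbols $\bigl(\tfrac{N(\mathfrak a),-D}{p}\bigr)$; these are equivalent formulations of the same non-squareness test. Your genus-character triples $(-1,1,-1)$ for $[\mathfrak a]$ and $(-1,-1,1)$ for $[\mathfrak p_0]$ carry the same information as the paper's Hilbert-symbol computations showing that the ambiguous class above $2$ and the products $J$, $JB$, $JC$ are non-square. The paper's abstract argument (``if three of $J,JA,JB,JC$ are non-square then $\ell'=\ell$'') is replaced in your write-up by the cleaner statement that $[\mathfrak a]$ and $[\mathfrak p_0]$ are independent in $\mathrm{Cl}/\mathrm{Cl}^2$, forcing $[\mathfrak a]$ to have odd coordinate in the $\mathbb Z/2^c$-factor and hence order $2^c$.

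Two minor remarks. First, your R\'edei step is redundant: once you show $[\mathfrak p_0]$ is non-square and has order $2$, the $4$-rank is automatically $\le 1$, and the existence of $[\mathfrak a]$ with order $2^\ell\ge 4$ gives $4$-rank $\ge 1$; so you could drop the first paragraph's R\'edei verification entirely and let the genus-character computation do double duty. Second, the phrase ``$[\mathfrak p_0]$ occupies the $\mathbb Z/2$ summand'' is slightly imprecise, since $[\mathfrak p_0]$ could be $(1,0)$ or $(1,2^{c-1})$ in $\mathbb Z/2\times\mathbb Z/2^c$; what you actually need (and use) is only that its image in $\mathrm{Cl}/\mathrm{Cl}^2$ is $(1,0)$, which holds in either case. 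Your exact-order argument for $[\mathfrak a]$ (the AM--GM bound ruling out $y\ne 0$, then the one-sided-valuation contradiction ruling out $y=0$) is a nice concrete alternative to the paper's appeal to the Ankeny--Chowla norm bound.
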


We now need to prove that there are infinitely many primes satisfying this criteria. Steven J. Miller asked the author if there was a more general way to introduce congruence conditions into Perelli's result on Goldbach numbers represented by polynomials~\cite{P}, which would imply that there are infinitely many such primes. We have found such a general theorem, which we prove using the circle method.
\begin{theorem}
Let $m$ be an even positive integer, and let $s_1$ and $s_2$ be relatively prime to $m$. Let $F\in \mathbb Z[x]$ be a polynomial with degree $k>0$ and positive leading coefficient that takes on an even value congruent to $s_1+s_2$ modulo $m$. Then there are infinitely many pairs of primes congruent to $s_1$ and $s_2$ modulo $m$, which sum to $F(n)$ for some $n$.
\end{theorem}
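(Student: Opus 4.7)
The plan is to apply the Hardy--Littlewood circle method, following Perelli's template but with the congruence conditions built directly into the prime exponential sums. Writing $e(x)=e^{2\pi i x}$, for large $N$ set
\[ S_i(\alpha) = \sum_{\substack{p\le N \\ p\equiv s_i \bmod m}} (\log p)\, e(p\alpha),\qquad i=1,2, \]
and
\[ W(\alpha) = \sum_{\substack{n\ge 1\\ F(n)\le N}} e(F(n)\alpha). \]
Then the weighted representation count
\[ R(N) \;:=\; \sum_{\substack{p_1+p_2=F(n)\le N\\ p_1\equiv s_1,\, p_2\equiv s_2\bmod m}} (\log p_1)(\log p_2) \;=\; \int_0^1 S_1(\alpha)\, S_2(\alpha)\, W(-\alpha)\, d\alpha, \]
and it suffices to show $R(N)\to\infty$.

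I would then dissect $[0,1]$ into major arcs $\mathfrak M$ centred on rationals $a/q$ with $q$ bounded by a small power of $\log N$, and minor arcs $\mathfrak m$ elsewhere. On $\mathfrak M$ the Siegel--Walfisz theorem (applied character-by-character modulo $\mathrm{lcm}(q,m)$) approximates each $S_i$, while a standard Weyl expansion approximates $W$. The resulting major-arc contribution factors as the product of a singular integral of order $N^{1+1/k}$ and a singular series $\mathfrak S(F;m,s_1,s_2)$. Using multiplicativity, the hypothesis that $F$ attains an even value $\equiv s_1+s_2 \pmod m$, combined with $\gcd(s_i,m)=1$, guarantees that the local factor of $\mathfrak S$ at every prime dividing $2m$ is strictly positive; the factors at the remaining primes are positive by a routine Hensel-type count, and the product converges to a positive constant. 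Hence the major arcs contribute $\gg N^{1+1/k}/(\log N)^2$.

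The crux is the minor arc estimate. For $k\ge 2$, Weyl's inequality applied to the polynomial $F$ gives the pointwise saving $\sup_{\alpha\in\mathfrak m}|W(\alpha)|\ll N^{1/k}(\log N)^{-C}$ for any fixed $C$, once the major-arc cutoff is taken sufficiently large. Combined with Cauchy--Schwarz and the $L^2$ bound $\int_0^1 |S_i(\alpha)|^2\,d\alpha \ll N\log N$ (again from Siegel--Walfisz), this yields
\[ \left|\int_{\mathfrak m} S_1 S_2 W\, d\alpha\right| \;\ll\; N^{1+1/k}(\log N)^{1-C}, \]
which is dominated by the main term. For $k=1$ the sum $W$ is a geometric progression, and the argument reduces to Vinogradov's treatment of the binary Goldbach problem restricted to arithmetic progressions, which must be handled as a separate base case.

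The step I expect to be most delicate is the positivity of $\mathfrak S$ at the primes dividing $m$: the familiar Goldbach condition that $F(n)$ be even must be refined to force the residue of $F(n)$ modulo each prime power $p^v \| m$ to match $s_1+s_2$, and this is precisely where the hypothesis on $F$ is used. Once positivity of $\mathfrak S$ and the minor-arc bound are in hand, $R(N)\to\infty$ produces infinitely many triples $(p_1,p_2,n)$ satisfying all the required conditions, proving the theorem.
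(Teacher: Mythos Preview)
Your proposal takes a genuinely different route from the paper. The paper follows Perelli's \emph{exceptional-set} template: it studies the binary problem $R_S(h)=\int_0^1 f_S(\alpha)^2 e(-h\alpha)\,d\alpha$ for each target $h=F(n)$ separately, and proves an $\ell^2$ bound
\[
\sum_{N^{1/k}\le n\le N^{1/k}+H}\bigl|R_S(F(n))-F(n)\,\mathfrak S_S(F(n))\bigr|^2 \ll HN^2L^{-A},
\]
from which it deduces that $F(n)$ is representable for almost all $n$ in short intervals. The minor arcs there are handled by squaring out the integral, applying Weyl's inequality to the resulting sum over $n$, and then invoking Gallagher's lemma and zero-density estimates for the prime sums. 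By contrast, you recast the problem as a \emph{ternary} additive problem by introducing the generating function $W(\alpha)=\sum e(F(n)\alpha)$ and bounding the minor arcs via the pointwise Weyl saving $\sup_{\mathfrak m}|W|\ll N^{1/k}(\log N)^{-C}$ together with the trivial $L^2$ bound on the prime sums. For $k\ge 2$ this is perfectly viable and in fact simpler for the bare infinitude statement; however, it gives strictly less than the paper's approach, which yields an asymptotic for $R_S(F(n))$ for almost all $n$ in short intervals (and this is what the paper later uses to obtain the quantitative lower bound on the number of fields). Your singular series is also a different object from the paper's: it carries an extra averaging over $n\bmod q$ via the complete sum $\sum_{r}e(F(r)a/q)$, so its positivity argument requires a genuine local-solution count rather than the paper's direct product formula; your sketch of this step is plausible but thinner than the rest.

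There is one real gap. For $k=1$ your minor-arc mechanism collapses, as you note, but your fallback to ``Vinogradov's treatment of the binary Goldbach problem restricted to arithmetic progressions'' is not a thing: Vinogradov's method gives individual results only for the \emph{ternary} problem. What you actually need for $k=1$ is precisely the exceptional-set argument (almost all integers in an arithmetic progression are sums of two primes from prescribed residue classes), which is exactly the method the paper carries out uniformly for all $k\ge 1$. So either supply that argument explicitly for $k=1$, or adopt the paper's $\ell^2$/exceptional-set framework throughout.
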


Putting these together, we prove our main theorem.

\begin{theorem}
There are infinitely many imaginary quadratic fields with 2-class group of type $(2,2^\ell)$, for any positive integer $\ell$. 

In particular, if $\text{Cl}_2(-d)$ is the 2-class group of $\mathbb Q(\sqrt{-d})$ and $\ell>1$, then
$$\left|\left\{ d\le X, \text{Cl}_2(-d)\cong(2,2^\ell)\right\}\right|\gg \frac{X^{\frac{1}{2}+\frac{1}{2\cdot2^\ell}}}{\log^2 X}.$$

\end{theorem}

The author thanks the Mt.~Holyoke REU (NSF grant DNS-0849637), the BYU Mathematics department, the BYU Office of Research and Creative Activities, and the National Science Foundation (NSF grant 1000132486) for their generous support. She also thanks Siman Wong, Steven J. Miller, Giuliana Davidoff, and Paul Jenkins, for their guidance with this project, and she thanks Ken Ono, Robert Lemke-Oliver, and Darrin Doud for their helpful comments and suggestions.

 \section{Prescribing the 2-class group}

We begin by using some algebraic number theory and Gauss' genus theory to find sufficient conditions for the discriminant of an imaginary quadratic field $\mathbb Q(\sqrt{-d})$ to have a 2-class group of the desired form. 

The following lemma is originally due to Ankeny and Chowla~\cite[Theorem 1]{AC}, modified slightly. It should be noted that Soundarajan~\cite{Sound} has significantly improved the lower bound on the number of such fields.  It remains to be seen whether one can use his improved result to improve a result on the type of question considered in this paper. 

 \begin{lemma}\label{AC}
 Fix $m>1$ and let $d=w^{2m}-x^2 >0$ with $w,x\in\mathbb Z$, $x$ even, $(x,w)=1$ and $0<x\le w^m-4$. Then the class group of $\mathbb Q(\sqrt{-d})$ has an element of order $2m$.  
 \end{lemma}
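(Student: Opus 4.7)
My plan follows the classical approach for such class-number divisibility results: construct an ideal $\mathfrak{a}\subset\mathcal{O}_K$ with $K=\mathbb{Q}(\sqrt{-d})$ such that $\mathfrak{a}^{2m}$ is principal, and then use size estimates to rule out any smaller power of $[\mathfrak{a}]$ being trivial.

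Set $\alpha=x+\sqrt{-d}$ and note $N(\alpha)=x^2+d=w^{2m}$; since $x$ is even and $\gcd(x,w)=1$, $w$ must be odd. The first step is to show that $(\alpha)$ and $(\bar\alpha)$ are coprime in $\mathcal{O}_K$: any common prime divisor has odd norm dividing $w^{2m}$, so lies above some $p\mid w$, and also contains $\alpha+\bar\alpha=2x$ and hence $x$, contradicting $\gcd(x,w)=1$. Since $(\alpha)(\bar\alpha)=(w)^{2m}$ is a $2m$-th power, coprimality forces every prime of $\mathcal{O}_K$ above $w$ to split in $K/\mathbb{Q}$ (an inert or ramified prime would divide both factors), producing a factorization $(\alpha)=\mathfrak{a}^{2m}$ with $\mathfrak{a}\bar{\mathfrak{a}}=(w)$. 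Hence $[\mathfrak{a}]$ has order dividing $2m$ in the class group.

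Suppose for contradiction that the order of $[\mathfrak{a}]$ is some $r<2m$; since $r$ is a proper divisor of $2m$, we have $r\le m$. Writing $\mathfrak{a}^r=(\beta)$ and $s=2m/r\ge 2$, we obtain $\alpha=u\beta^s$ for a unit $u$. The hypotheses give $w^m\ge 6$ (from $2\le x\le w^m-4$) and $d\ge w^{2m}-(w^m-4)^2 = 8w^m-16\ge 32$, so $\mathcal{O}_K^\times=\{\pm 1\}$ and $u=\pm 1$. Taking absolute values yields $|\beta|^2=w^r\le w^m$. Assuming $d$ squarefree, so that $\mathcal{O}_K=\mathbb{Z}[\sqrt{-d}]$, we can write $\beta=A+B\sqrt{-d}$ with $A,B\in\mathbb{Z}$ and $A^2+B^2d=w^r$. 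If $B=0$ then $\beta^s$ is rational while $\alpha$ has nonzero imaginary part, a contradiction; if $B\ne 0$ then $d\le w^r\le w^m$, but $d\ge 8w^m-16$ forces $7w^m\le 16$, contradicting $w^m\ge 6$. So $[\mathfrak{a}]$ has order exactly $2m$.

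The principal obstacle is the case when $d$ is not squarefree, say $d=c^2 d_0$ with $c>1$: then $\mathcal{O}_K=\mathbb{Z}[\sqrt{-d_0}]$ and $\beta=A+B\sqrt{-d_0}$, so the size bound only yields the weaker $d_0\le w^r$, which by itself is not enough. To close this gap I would combine the size estimate with the arithmetic constraints coming from $\beta^s=\pm(x+c\sqrt{-d_0})$: expanding by the binomial theorem, the coefficient of $\sqrt{-d_0}$ in $\beta^s$ is divisible by $B$, so $B\mid c$; and parity considerations (using $w,c,d_0$ all odd while $x$ is even, which already rules out $s=2$ immediately since $2AB=\pm c$ is impossible for odd $c$) together with the window $w^{m/2}\sqrt{w^m-9}<x\le w^m-4$ that such solutions would require should eliminate the remaining small cases.
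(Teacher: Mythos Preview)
Your argument is essentially the paper's: form $(\alpha)(\bar\alpha)=(w)^{2m}$, show the two factors are coprime using that any common prime divisor would divide both $w^{2m}$ and $2x$, deduce $(\alpha)=\mathfrak{a}^{2m}$, and then rule out a smaller principal power $\mathfrak{a}^n$ (with $n\le m$) by comparing the norm $w^n\le w^m$ against the size of $d$. The paper writes the generator as $u+v\sqrt{-d}$ with $u,v\in\tfrac{1}{2}\mathbb{Z}$ and uses $v\ne 0\Rightarrow d/4\le u^2+v^2d\le w^m$, which is the same size contradiction you reach (your bound is a factor of $4$ tighter only because you take $B\in\mathbb{Z}$; since $d\equiv 1\pmod 4$ here, that is legitimate).

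On your ``principal obstacle'': the paper simply does not address the non-squarefree case---its assertion $u,v\in\tfrac{1}{2}\mathbb{Z}$ tacitly assumes $d$ is the field discriminant up to sign. So you have not missed a trick that the paper uses; the lemma is applied only with $d=p_1p_2$ squarefree, and the paper's proof (like yours) is complete in that setting. Your proposed binomial/parity patch for general $d$ is therefore unnecessary for the paper's purposes, and you need not pursue it.
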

 \begin{proof}

Consider the ideals $(x+\sqrt{-d})$ and $(x-\sqrt{-d})$. We claim these are coprime. If not, there is some ideal $\mathfrak p$ of $\mathcal O_{-d}$ which divides both of them, and hence divides both their product, $(x^2+d)=(w^{2m})$ and  the ideal $(2x)$, as $x+\sqrt{-d}+x-\sqrt{-d}=2x$. However, $w$ and $2x$ are relatively prime, so this is impossible. Since $(x+\sqrt{-d})(x-\sqrt{-d})=(x^2+d)=(w^{2m})$, the factor $(x+\sqrt{-d})$ is equal to $J^{2m}$ for some ideal $J$ of $\mathcal O_{-d}$.
 
 Now suppose that $J$ has order less than $2m$, so that for some $0<n\le m$, the ideal $J^n$ is principal and thus $J^n=(u+v\sqrt{-d})$  for some $u,v\in \frac{1}{2}\mathbb Z$. We note that $v$ cannot be 0, since $J^{2m}$ contains non-real elements. Since $v\ne 0$, then $\frac{d}{4}\le u^2+v^2d= \text{Norm}(J^n)$. Since $n\le m$, and  $\text{Norm}(J^{2m})=w^{2m}$, we have that $d\le 4w^m$. But $0<d=w^{2m}-x^2$, and so $w^{2m}-4w^m\le x^2$, thus $(w^m-2)^2\le x^2+4< (x+2)^2$, which contradicts our condition on $x$. Therefore, the ideal class of $J$ has order exactly $2m$.

 \end{proof}

Applying this with genus theory, we get the following corollary.

 \begin{corollary}
 
 Fix $\ell\ge 1$, and let $w$ be an odd integer such that $2w^{2^{\ell-1}}$ is the sum of two distinct primes $p_1,p_2\ge 5$. Then the 2-Sylow class group of $\mathbb Q(\sqrt{-p_1p_2})$ has type $(2^\upsilon,2^{\ell'})$, where $\ell'\ge \ell$.
 
 \end{corollary}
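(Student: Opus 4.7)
The plan is to combine Lemma~\ref{AC}, which will produce an element of order $2^\ell$ in the class group, with Gauss's genus theory, which will pin down the $2$-rank as exactly $2$. Together these force the $2$-Sylow to take the form $\mathbb Z/2^\upsilon\mathbb Z\times\mathbb Z/2^{\ell'}\mathbb Z$ with $\ell'\ge\ell$.

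For the application of Lemma~\ref{AC}, I would set $d:=p_1p_2$, $m:=2^{\ell-1}$, and, assuming without loss of generality that $p_1>p_2$, $x:=(p_1-p_2)/2$. The identity $ab=\bigl(\tfrac{a+b}{2}\bigr)^2-\bigl(\tfrac{a-b}{2}\bigr)^2$ combined with the hypothesis $p_1+p_2=2w^{2^{\ell-1}}$ yields $d=w^{2m}-x^2$. For $\ell\ge 2$ (so that $m>1$) the four hypotheses of the lemma must be verified: $x$ is even, because $p_1+p_2\equiv 2\bmod 4$ (as $w$ is odd) forces $p_1\equiv p_2\bmod 4$; $(x,w)=1$, because any common odd prime divisor would then divide both $2x=p_1-p_2$ and $2w^m=p_1+p_2$, and hence both $p_1$ and $p_2$, contradicting their distinctness; and $0<x\le w^m-4$ follows from the identity $x=w^m-p_2$ together with $p_2\ge 5$. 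Lemma~\ref{AC} then produces an element of order $2m=2^\ell$ in the class group.

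To control the $2$-rank I would invoke Gauss's genus theory. The congruence $p_1\equiv p_2\bmod 4$ from the previous step forces $p_1p_2\equiv 1\bmod 4$, so the discriminant of $\mathbb Q(\sqrt{-p_1p_2})$ is $-4p_1p_2$, which has exactly three distinct prime divisors $2, p_1, p_2$. Genus theory for imaginary quadratic fields then gives $2$-rank $=3-1=2$, so the $2$-Sylow decomposes as $\mathbb Z/2^\upsilon\mathbb Z\times\mathbb Z/2^{\ell'}\mathbb Z$ with $\upsilon,\ell'\ge 1$. The element of order $2^\ell$ produced above must lie in the $2$-Sylow, forcing $\max(\upsilon,\ell')\ge\ell$, and relabelling so that $\ell'\ge\upsilon$ finishes the case $\ell\ge 2$. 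The case $\ell=1$ requires only $\ell'\ge 1$, which is immediate from the $2$-rank being $2$.

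I do not anticipate any serious obstacle: all four hypotheses of Lemma~\ref{AC} flow automatically from the parity of $w$, the distinctness and size of $p_1$ and $p_2$, and the identity $p_1+p_2=2w^{2^{\ell-1}}$. The analytic content has already been absorbed into Lemma~\ref{AC}, so the remaining task is essentially the bookkeeping above together with a standard appeal to genus theory.
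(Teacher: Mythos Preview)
Your argument is correct and follows essentially the same route as the paper: apply genus theory (using $p_1p_2\equiv 1\bmod 4$, so the discriminant is $-4p_1p_2$ with three prime divisors and hence $2$-rank $2$) to get the shape $(2^\upsilon,2^{\ell'})$, then apply Lemma~\ref{AC} with $p_i=w^{2^{\ell-1}}\pm x$ to produce an element of order $2^\ell$. Your write-up is in fact more careful than the paper's own proof, since you explicitly verify the side conditions $x$ even, $(x,w)=1$, and $0<x\le w^m-4$, and you separately dispose of the case $\ell=1$ (where Lemma~\ref{AC}'s hypothesis $m>1$ fails but the conclusion $\ell'\ge 1$ is immediate from the $2$-rank).
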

 \begin{proof}
 Since $w$ is odd, $p_1p_2\equiv 1\bmod 4$. Thus by genus theory, we have a 2-class group of type $(2^\upsilon,2^{\ell'})$. To show that $\ell'\ge \ell$, we apply the above lemma to our primes, which we write as $w^{2^{\ell-1}}\pm x$, so that $p_1p_2=w^{2^\ell}-x^2$. Since $p_1,p_2\ge 5$, the condition on $x$ is satisfied, so $\ell'\ge \ell$.
 
 \end{proof}
 
This allows us to describe a condition for the discriminant of an imaginary quadratic field so that it has a 2-class group of the desired type, which is our version of ~\cite[Lemma 2.3]{DMW}

 \begin{proposition}\label{alg}
 If $w=3m^2$ where $m$ is an odd integer, $p_1\equiv 11 \bmod 24$, $p_2\equiv 7 \bmod 24$, and $p_1+p_2=2w^{2^{\ell-1}}$ with $\ell\ge 2$, then the 2-class group of $\mathbb Q(\sqrt{-p_1p_2})$ has type $(2,2^\ell)$.
 \end{proposition}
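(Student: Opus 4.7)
The plan is to refine the type $(2^\upsilon,2^{\ell'})$ with $\ell'\ge\ell$ provided by Corollary~2.2 in two steps: first I would compute the 4-rank of $\text{Cl}_2(-p_1p_2)$ to force $\upsilon=1$, then I would exhibit a genus character which forces the class $[J]$ from Lemma~2.1 to have maximal order in the 2-class group, so that $\ell'=\ell$.

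Since $p_1,p_2\equiv 3\bmod 4$, the product $p_1p_2\equiv 1\bmod 4$, so the discriminant of $K=\mathbb Q(\sqrt{-p_1p_2})$ is $D=-4p_1p_2$ with prime discriminants $-4,-p_1,-p_2$, and genus theory gives 2-rank $=t-1=2$. To get the 4-rank I would form the $3\times 3$ R\'edei matrix $R$ over $\mathbb F_2$ whose off-diagonal entries encode the Kronecker symbols $\left(\frac{D_i}{q_j}\right)$ for the primes $q_j\in\{2,p_1,p_2\}$, with diagonal set so that R\'edei's theorem applies. The hypotheses $p_1\equiv 3$, $p_2\equiv 7\bmod 8$, and $p_i\equiv 3\bmod 4$ determine every entry except $\left(\frac{p_1}{p_2}\right)$, which is pinned down by the identity $p_1+p_2=2w^{2^{\ell-1}}=2(w^{2^{\ell-2}})^2$ (valid since $\ell\ge 2$): this gives $p_1\equiv 2(w^{2^{\ell-2}})^2\bmod p_2$, so $\left(\frac{p_1}{p_2}\right)=\left(\frac{2}{p_2}\right)=1$ by $p_2\equiv 7\bmod 8$. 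A direct calculation shows $R$ has $\mathbb F_2$-rank~$1$, so 4-rank $=t-1-\text{rank}(R)=1$; combined with 2-rank~$2$, this forces $\upsilon=1$ and $\text{Cl}_2(-p_1p_2)\cong\mathbb Z/2\times\mathbb Z/2^{\ell'}$ with $\ell'\ge\ell\ge 2$.

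To force $\ell'=\ell$, I would use the genus character $\chi=\chi_{-p_2}$. Since $-p_2\equiv 1\bmod 8$, $\chi$ takes value $\left(\frac{-p_2}{2}\right)=1$ on the class of the prime above~$2$; and $\left(\frac{-p_2}{p_1}\right)=-\left(\frac{p_2}{p_1}\right)=-\left(\frac{2}{p_1}\right)=1$ (using $p_1\equiv 3\bmod 8$) gives $\chi$-value~$1$ on the class of the prime above~$p_1$. By genus theory these ramified-prime classes generate the 2-torsion subgroup $A[2]$ of $A=\text{Cl}_2(-p_1p_2)$, so $\chi$ vanishes on the image of $A[2]$ in $A/2A$. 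On the other hand $J$ has norm $w=3m^2$, which is coprime to $2p_1p_2$, so $\chi([J])=\left(\frac{-p_2}{w}\right)=\left(\frac{-p_2}{3}\right)=-1$ (using $-p_2\equiv 2\bmod 3$, from $p_2\equiv 1\bmod 3$). Hence $[J]$'s image in $A/2A$ lies outside the 1-dimensional image of $A[2]$, and a direct inspection of $\mathbb Z/2\times\mathbb Z/2^{\ell'}$ shows any such class must have order equal to the exponent $2^{\ell'}$. Combined with the fact that $[J]$ has order $2^\ell$ from Lemma~2.1, this forces $\ell'=\ell$.

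The main obstacle will be identifying the genus character that does both jobs: the conditions $p_2\equiv 7\bmod 24$ (giving $p_2\equiv 7\bmod 8$ and $p_2\equiv 1\bmod 3$) and $w=3m^2$ appear to be calibrated precisely so that $\chi_{-p_2}$ simultaneously kills all ambiguous ideal classes of $K$ while being non-trivial at the norm~$w$ of~$J$. Once one sees this, the rest is bookkeeping with Kronecker symbols and the structure of $\mathbb Z/2\times\mathbb Z/2^{\ell'}$.
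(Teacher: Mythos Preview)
Your argument is correct and reaches the same conclusion, but the packaging differs from the paper's. The paper works directly with Hasse's fundamental criterion: it lists the three ambiguous classes $A,B,C$ of order~$2$ (with norms $2$, $p$, $2p$) and computes Hilbert symbols $\left(\frac{\mathrm{Norm}(\cdot),-D}{p}\right)$ at ramified primes to show that (i) at least one of $A,B,C$ is a non-square, giving $\upsilon=1$, and (ii) at least three of $J,JA,JB,JC$ are non-squares, which by an elementary coset argument in $\mathbb Z/2\times\mathbb Z/2^{\ell'}$ forces $\ell'=\ell$. Your route instead reads $\upsilon=1$ off the R\'edei matrix and then isolates a \emph{single} genus character $\chi_{-p_2}$ that is trivial on the image of $A[2]$ in $A/2A$ but nontrivial on $[J]$, so that $[J]$ must project onto a generator of the $\mathbb Z/2^{\ell'}$ factor.

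The arithmetic input is the same --- the identical Kronecker symbols are computed, and Hasse's criterion is just the statement that a class is a square iff every genus character is $+1$ on it --- but your organization is more structural: one well-chosen character replaces the paper's case-by-case verification that several classes are non-square. The paper's approach, in exchange, is more self-contained (it does not invoke R\'edei's theorem) and makes the role of each congruence hypothesis visible in a separate Hilbert-symbol line. Your closing remark, that $p_2\equiv 7\bmod 24$ and $w=3m^2$ are calibrated exactly so that $\chi_{-p_2}$ kills the ambiguous classes while detecting $[J]$, is a nice conceptual point not made explicit in the paper.
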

 \begin{proof}
 
 By the corollary, we know that the 2-class group is of the form $(2^\upsilon,2^{\ell'})$, where $\ell'\ge \ell$. 

We now wish to show that the group has the desired form, i.e.  $\upsilon=1$ and $\ell'=\ell$. First we will consider the group abstractly, and consider what properties the 3 elements of order 2 must have for $\upsilon=1$ and $\ell'=\ell$. Then we will use our specifications to $w$, $p_1$, and $p_2$ given to us along with Hasse's fundamental criterion to show that our ideal classes with order 2 indeed have the desired properties.

We let $J$ be the ideal from Lemma~\ref{AC} with norm $w$ and in an ideal class with order exactly $2^\ell$. We let $A$, $B$, and $C$ be representatives of each of the ideal classes with order 2. Considering the ideal class group $(2^\upsilon,2^{\ell'})$, we note that it can be generated by two elements $a$ and $b$ such that $a^{2^{\ell'}}=b^\upsilon=1$. The three elements of order 2 are then $a^{2^{\ell'-1}}$, $b^{2^{\upsilon-1}}$, and $a^{2^{\ell'-1}}b^{2^{\upsilon-1}}$ respectively. 

Since $\ell\ge 2$, if $\upsilon\ge 2$ as well, that implies that all the ideal classes of $A$, $B$ and $C$ are square. So we must show that one of ideal classes $A$, $B$ or $C$ is non-square to show that $\upsilon=1$. 

Assuming $\upsilon=1$, we now consider the ideal class of $J$. We know that $J=a^rb^s$ for some $r,s\in\mathbb Z$. We will also consider the ideal classes of $JA$, $JB$ and $JC$, which are $a^{2^{\ell'-1}+r}b^s$, $a^{r}b^{s+1}$, and $a^{2^{\ell'-1}+r}b^{s+1}$ respectively. 

Suppose three of the ideal classes of $J$, $JA$, $JB$, and $JC$ are non-square. So one of $J$ and $JA$ is a non-square, thus if $s$ is even, $r$ must be odd. Similarly, one of $JB$ and $JC$ is non-square, so if $s$ is odd, $r$ must be odd. Hence $r$ is odd. Now $J$ has order $2^\ell$, so since $\ell\ge 2$, $(a^rb^s)^{2^{\ell}}=a^{r2^{\ell}}b^{s2^{\ell}}=a^{r2^{\ell}}=1$ and thus $a$ has order $r2^\ell$.  But this is a contradiction if $\ell'> \ell$ since $r$ is odd and $a$ has order $2^{\ell'}$. Therefore $\ell=\ell'$ if three of the ideal classes of $J$, $JA$, $JB$ and $JC$ are non-square.

We now will show that the ideal classes under consideration are indeed not square. To do this, we use 
 Hasse's fundamental criterion~\cite{H} which says that for an ideal $\mathfrak a \subset \mathcal O_{-D}$, the ideal class of $\mathfrak a$ is a square iff
 $$\left(\frac{\text{Norm}(\mathfrak a),-D}{p}\right)=1 \text{ for every prime } p|D.$$ In our case, $-p_1p_2\equiv -1 \bmod 4$, so $D=4p_1p_2$.

Let $p$ be the smaller of $p_1$ and $p_2$. By genus theory, $(2,1+\sqrt{-D}), (p,\sqrt{-D}), (2,1+\sqrt{-D})(p,\sqrt{-D})$ are in the three distinct ideal classes with order 2 (see, for example~\cite[Prop.~3.3 + Theorem~7.7]{CX}). Respectively, these ideals have norms $2, p$, and $2p$.

   We note that $\left(\frac{w}{p_1}\right)=\left(\frac{3}{p_1}\right)=1$ since $p_1\equiv 11 \bmod 12$, and that $p_2 \equiv 2w^{2^{\ell-1}}\bmod p_1$, so $\left(\frac{p_2}{p_1}\right)=\left(\frac{2}{p_1}\right)=-1$. Then by quadratic reciprocity $\left(\frac{p_1}{p_2}\right)=1$, since $p_1\equiv p_2\equiv 3 \bmod 4$.
 
 We now calculate the appropriate Hilbert symbols:
 
 $$\left(\frac{2,-D}{2}\right) = (-1)^{\omega(-p_1p_2)}=-1 \text{ since } p_1p_2\equiv 5\bmod 8,$$
 
 $$\left(\frac{w,-D}{2}\right) = (-1)^{1\cdot1}=-1 \text{ since } w\equiv-p_1p_2\equiv 3\bmod 4,$$

Since we're not sure which prime $p$ is, we'll establish the required calculations for both cases. 

 $$\left(\frac{p_1w,-D}{p_1}\right) = (-1)^{1}\left(\frac{w}{p_1}\right)\left(\frac{-4p_2}{p_1}\right)=\left(\frac{-1}{p_1}\right)\left(\frac{p_2}{p_1}\right)=-1 \text{ since } p_1\equiv 3\bmod 4,$$
 
 $$\left(\frac{p_2w,-D}{p_1}\right) = (-1)^{0}\left(\frac{p_2w}{p_1}\right)=\left(\frac{p_2}{p_1}\right)\left(\frac{w}{p_1}\right)=-1.$$
 
In either case, we have

$$\left(\frac{2pw,-D}{2}\right) = (-1)^{1\cdot1}=-1 \text{ since } -p_1p_2\equiv 3\bmod 8,$$

 Thus, none of the aforementioned ideal classes are square, so the 2-class group has type $(2,2^\ell)$.

\end{proof}

The above lemma requires that $\ell\ge2$. For the $\ell=1$ case, the full theorem is much simpler to prove.

 \begin{proposition}\label{k1}
There are infinitely many imaginary quadratic fields with 2-class group with type $(2,2)$.
 \end{proposition}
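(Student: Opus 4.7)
The plan is to follow the strategy of Proposition~\ref{alg}, with two simplifications made possible by $\ell=1$: we do not need Lemma~\ref{AC} at all (since a 2-rank of $2$ coming directly from genus theory already forces an element of order at least $2$ in each cyclic factor), and we do not need any Goldbach-type representation of $p_1+p_2$, so that ordinary Dirichlet will replace Theorem 1.2.

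First I would impose the same local conditions on the primes as before, namely $p_1\equiv 11\bmod 24$ and $p_2\equiv 7\bmod 24$, together with the direct Legendre condition $\left(\frac{p_2}{p_1}\right)=-1$. This last condition is exactly what the relation $p_1+p_2=2w^{2^{\ell-1}}$ was used to produce in Proposition~\ref{alg} (via $p_2\equiv 2w^{2^{\ell-1}}\bmod p_1$ together with $\left(\frac{2}{p_1}\right)=-1$), so imposing it directly is harmless. To produce infinitely many such pairs, fix any prime $p_1\equiv 11\bmod 24$, choose a quadratic non-residue $r$ modulo $p_1$, and apply Dirichlet's theorem in the progression modulo $24p_1$ defined by $p_2\equiv 7\bmod 24$ and $p_2\equiv r\bmod p_1$; these congruences are compatible by the Chinese Remainder Theorem since $\gcd(24,p_1)=1$. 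Distinct values of $p_1p_2$ then yield distinct fields $\mathbb Q(\sqrt{-p_1p_2})$.

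Given any such pair, since $p_1,p_2\equiv 3\bmod 4$ the discriminant is $-D=-4p_1p_2$ with three ramified primes $2,p_1,p_2$, so genus theory forces the 2-rank of the class group to be exactly $2$ and the 2-class group to have the shape $(2^\upsilon,2^{\ell'})$ with $\upsilon,\ell'\ge 1$. The group is $(2,2)$ precisely when the 4-rank vanishes, i.e., when none of the three non-trivial classes of order $2$ is a square. By Hasse's fundamental criterion this amounts to exhibiting, for each of the three ramified prime ideals (of norms $2$, $p_1$, $p_2$), some prime $p\mid D$ at which the Hilbert symbol $\left(\frac{N,-D}{p}\right)$ equals $-1$. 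These are the same sorts of calculations already carried out in Proposition~\ref{alg}: at $p=2$ one gets $-1$ from $p_1p_2\equiv 5\bmod 8$; at $p=p_1$ one gets $-1$ for both the norm-$p_1$ and norm-$p_2$ classes, using $p_1\equiv 3\bmod 4$ together with $\left(\frac{p_2}{p_1}\right)=-1$ and elementary symbol manipulations.

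The only step requiring any work is the Hilbert symbol bookkeeping, and it is in fact shorter than in Proposition~\ref{alg} because no auxiliary ideal of odd norm $w$ enters the picture. Everything else—Dirichlet in a single arithmetic progression and genus theory to pin down the 2-rank—is routine, which is why the $\ell=1$ case is noticeably easier than $\ell\ge 2$.
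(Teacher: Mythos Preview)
Your proposal is correct and follows essentially the same route as the paper: pick two primes $p_1,p_2\equiv 3\bmod 4$ with prescribed residues mod $8$ and with $p_2$ a quadratic non-residue mod $p_1$, use Dirichlet's theorem for infinitude, genus theory to get 2-rank exactly $2$, and Hasse's criterion to kill the 4-rank. The only cosmetic differences are that the paper works modulo $8$ rather than $24$ (your extra condition modulo $3$ is inherited from Proposition~\ref{alg} but plays no role here) and writes the non-residue condition as $p_2\equiv 7\bmod 8p_1$ rather than imposing $\bigl(\tfrac{p_2}{p_1}\bigr)=-1$ directly.
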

 \begin{proof}

Let $p_1\equiv 3 \bmod 8$ where $p_1$ is a prime, and let $p_2\equiv 7 \bmod 8p_1$. By Dirichlet's theorem, there are clearly an infinite number of such pairs of primes. Consider the imaginary quadratic field $\mathbb Q(\sqrt{-D})$, where $D=4p_1p_2$.

Since $p_1<p_2$, genus theory gives us that $(2,1+\sqrt{-D})$, $(p_1,\sqrt{-D})$, $(2,1+\sqrt{-D})(p_1,\sqrt{-D})$ are in the three distinct ideal classes with order 2. Respectively, these ideals have norms $2, p_1$, and $2p_1$. 

To prove that $\mathbb Q(\sqrt{-D})$ has a 2-class group of the desired type, we only need show that the above three ideal classes are non-square, which we can do by using Hasse's criterion. 

Calculating the appropriate Hilbert symbols, 

 $$\left(\frac{2,-D}{2}\right) = (-1)^{\omega(-p_1p_2)}=-1 \text{ since } -p_1p_2\equiv 3\bmod 8,$$

 $$\left(\frac{p_1,-D}{p_2}\right) = \left(\frac{p_1}{p_2}\right)=-1 \text{ by quadratic reciprocity,}$$

 $$\left(\frac{2p_1,-D}{p_2}\right) = \left(\frac{2p_1}{p_2}\right)=-1 \text{ since } p_2\equiv 7\bmod 8.$$

Thus, the field has a 2-class group of the required type.

\end{proof}

 \section{Circle method}

\subsection{Overview}
We now use the circle method to show that there are infinitely many pairs of primes satisfying the conditions in Proposition~\ref{alg}. We will do this by modifying Perelli's proof on Goldbach numbers represented by polynomials~\cite{P} to handle arbitrary congruence conditions. 

We will consider congruences modulo a positive integer $m$, with our primes $p_1,p_2$ equivalent to $s_1$ and $s_2$ modulo $m$, respectively. We want to find infinitely many pairs of primes whose sum is represented by a given polynomial $F\in\mathbb Z[x]$. Since there aren't very many primes dividing $m$, we restrict our consideration to  $s_1$ and $s_2$ be relatively prime to $m$. We will also require that the polynomial represents at least one value that is congruent to $s_1+s_2$ modulo $m$, and that it has degree $k\ge1$. 

We let $N$ be a sufficiently large positive integer, and we define $L=\log N$ and $P=L^B$, where $B$ is a positive constant. We'll take $n$ satisfying $N^{1/k}\le n \le N^{1/k}+H$ for some $H\le N^{1/k}$. So if $F(x)=a_kx^k+\cdots + a_0$, then $F(n)$ will be on the order of $c_0N$, where $c_0$ is a non-zero constant. If we restrict to primes smaller than $N$, then we will not have $F(n)=p_1+p_2$. So we will take primes up to $N$ times a non-zero constant $c_1$ to ensure we have enough room for solutions.

To apply the circle method to our problem, we will use the function $$f_s(\alpha)=\sum_{\substack{p\le c_1N \\ p\equiv s \bmod m}}(\log p)e(\alpha p),$$ where $e(x)=e^{2\pi i x}$. This will hold the desired information about the prime numbers equivalent to an arbitrary $s$ modulo $m$ that we wish to consider. We will consider also consider the related function $f_S(\alpha)=f_{s_1}(\alpha)+f_{s_2}(\alpha)$. By integrating it in the following manner, we are able to perform a weighted count of the number of such primes which sum to a given number $n$:

$$R_S(n)=\int_{[0,1]}f_S(\alpha)^2e(-\alpha n)d\alpha=\sum_{\substack{p_1,p_2\le c_1N\\ p_1+p_2 =n \\ p_1,p_2\equiv s_1,s_2\bmod m}}\log p_1 \log p_2.$$

This also counts pairs of primes both congruent to $s_1$ or $s_2$, but we can avoid counting these pairs with some basic congruence arguments. If we find a positive lower bound for this integral, there must be at least one such representation. Our focus will thus be on approximating and bounding this integral sufficiently well to prove that we have infinitely many such pairs of primes. Perelli's argument for Goldbach numbers represented by polynomials~\cite{P} does the bulk of this work, so we will follow his argument closely. Our modifications will involve restricting the congruence classes of the primes, to ensure that they have the desired properties. 
 
It is easier to split our integral into major arcs near rational points, and minor arcs everywhere else. Since $e(\alpha)$ has period 1, it does not matter which unit interval we integrate over, so we will choose the interval $(PN^{-1},1+PN^{-1})$ for convenience. For $1\le a \le q \le P$ with $(a,q)=1$, define
 
 $$\mathfrak M'(q,a)=\left\{\alpha:\left|\alpha-\frac{a}{q}\right|\le PN^{-1}\right\}$$ as the major arc centered at $\frac{a}{q}$. $\mathfrak M$ will denote the union of all the major arcs. Since $N$ is large, the major arcs are disjoint, and lie in $(PN^{-1},1+PN^{-1}].$ We define the minor arcs $\mathfrak m =(PN^{-1},1+PN^{-1}]\backslash \mathfrak M$. 

We will demonstrate that a certain sum $\mathfrak S_S(F(n))$ converges, and we'll use it to approximate the contribution from the major arcs.  Then we'll bound the minor arcs, and combine these to prove the following theorem. 

\begin{theorem}~\label{estimate}
Let $m$ be a positive integer and let $s_1$ and $s_2$ be relatively prime to $m$. 

Let $F\in \mathbb Z[x]$ be a polynomial with degree $k>0$ and positive leading coefficient, and let $L= \log N$, $A,\epsilon\ge 0$, and $H$ such that $N^{1/(3k)+\epsilon}\le H \le N^{1/k-\epsilon}$.

Then $$\sum_{N^{1/k}\le n \le N^{1/k}+H}|R_{S}(F(n))-F(n)\mathfrak S_{S} (F(n))|^2\ll HN^2L^{-A}.$$

\end{theorem}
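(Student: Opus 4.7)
The plan is to follow Perelli's argument for Goldbach numbers represented by polynomials, modified at each stage to track the congruence conditions $p_i\equiv s_i\bmod m$. Split
$$R_S(F(n))=\int_{\mathfrak M}f_S(\alpha)^2e(-\alpha F(n))\,d\alpha+\int_{\mathfrak m}f_S(\alpha)^2e(-\alpha F(n))\,d\alpha,$$
and decompose the target $F(n)\mathfrak S_S(F(n))$ into a truncation of the singular series matching the major arcs plus a tail. It then suffices to bound in mean square (i) the difference between the major-arc integral and the truncated singular series contribution, and (ii) the minor-arc integral, each by $HN^2L^{-A}$.

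For (i), on $\mathfrak M'(q,a)$ with $\beta=\alpha-a/q$, apply Siegel--Walfisz to primes in the arithmetic progression $r\bmod\operatorname{lcm}(q,m)$ with $r\equiv s_i\bmod m$ and $(r,q)=1$. This yields
$$f_{s_i}(\alpha)=\frac{c_{q,m,s_i}(a)}{\varphi(\operatorname{lcm}(q,m))}\int_0^{c_1N}e(\beta t)\,dt+O\bigl(Ne^{-c\sqrt L}\bigr),$$
where $c_{q,m,s_i}(a)$ is a twisted Gauss sum generalizing the Ramanujan sum that appears in Perelli's version. Squaring, integrating over $\mathfrak M$, and completing the resulting finite sum to $\mathfrak S_S(F(n))$ produce errors of Hua/Linnik type; each is $O(N^2L^{-A})$ uniformly in $n$, so the $H$-fold sum obeys the required bound. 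The congruence conditions enter only through the modified arithmetic factors, and the polynomial $F$ contributes only via the phase, which integrates to the expected factor $F(n)$.

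The main obstacle is (ii). Expanding the square and swapping sum with integral,
$$\sum_n\Bigl|\int_{\mathfrak m}f_S(\alpha)^2e(-\alpha F(n))\,d\alpha\Bigr|^2=\int_{\mathfrak m}\int_{\mathfrak m}f_S(\alpha)^2\overline{f_S(\gamma)^2}\sum_{n}e\bigl(-(\alpha-\gamma)F(n)\bigr)\,d\alpha\,d\gamma.$$
The inner short-interval Weyl sum of degree $k$ saves a power of $H$ whenever $\alpha-\gamma$ is not too well-approximated by a rational of small denominator. Combining this saving with the pointwise minor-arc bound $\sup_{\alpha\in\mathfrak m}|f_S(\alpha)|\ll NL^{-C}$, obtained from Vaughan's identity applied to each $f_{s_i}$ (the restriction $p\equiv s_i\bmod m$ costs only constants depending on $m$, since $(s_i,m)=1$), and with Parseval $\int_0^1|f_S(\alpha)|^2\,d\alpha\ll NL$, Perelli's hybrid Cauchy--Schwarz argument delivers the claimed bound. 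The balance between the degree-$k$ Weyl exponent and the minor-arc savings is exactly what forces the hypothesis $H\ge N^{1/(3k)+\epsilon}$; since the congruence conditions never interact with the oscillation of $e(-\alpha F(n))$, Perelli's threshold carries over to our setting without change.
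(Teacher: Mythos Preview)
Your major-arc outline matches the paper's approach. One small correction: the singular-series tail $\mathfrak S_S(h)-\mathfrak S_S(h;P)\ll h\tau(h)/(P\phi(h))$ is \emph{not} $O(L^{-A})$ uniformly in $n$, since $\tau(F(n))$ and $F(n)/\phi(F(n))$ can be large for individual $n$. The paper controls this term only on average, invoking Nair's theorem on multiplicative functions at polynomial arguments to bound $\sum_n |h\tau(h)/(P\phi(h))|^2$; you should not claim a uniform pointwise bound here.

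The minor-arc sketch has a real gap. Perelli's argument, and the paper's adaptation of it, does \emph{not} use a pointwise Vinogradov--Vaughan bound $\sup_{\mathfrak m}|f_S|\ll NL^{-C}$. After the Weyl-inequality step (your double-integral expansion is essentially this), the problem is reduced to bounding $\int_{\mathfrak M''(q,a)}|f_S(\alpha)|^2\,d\alpha$ over Farey arcs with $q\le Q\asymp H^{k/2}L^{-B/8}$, including arcs with $q\le P=L^B$ on which only $|\alpha-a/q|>P/N$ is known. On such arcs the denominator is tiny and Vaughan's identity yields no saving whatsoever; the required cancellation is an $L^2$-over-short-intervals phenomenon, not an $L^\infty$ one. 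What the paper actually does is expand $[n\equiv s_1,s_2\bmod m]$ in Dirichlet characters $\chi_i$ modulo $m$, write $f_S(a/q+\eta)=\text{main}+R_S(\eta,q,a)$ with $R_S$ built from sums $W_S(\chi,\eta)=\sum\Lambda(n)\chi(n)e(n\eta)[n\equiv s_1,s_2]-\cdots$, and then bound $\int_{|\eta|\le 1/(qQ)}|W_S(\chi,\eta)|^2\,d\eta$ via Gallagher's lemma together with the explicit formula for $\sum_{n\le x}\Lambda(n)\chi\chi_i(n)$ and the zero-density input of Perelli--Pintz. It is precisely this machinery that makes the range $H\ge N^{1/(3k)+\epsilon}$ accessible; your proposal supplies no substitute for it, and the combination ``pointwise Vaughan $+$ Parseval $+$ Weyl saving'' falls short by a power of $N$ for $H$ this small.
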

\begin{proof}

We may assume following Perelli~\cite[\S 2]{P} that $H=N^{1/(3k)+\epsilon}$, that $\epsilon>0$ is sufficiently small, that $A>0$ is sufficiently large, and that $N\ge N_0(A,\epsilon)$ is a large constant. 
Now,
\begin{eqnarray*}&&\sum_{N^{1/k}\le n \le N^{1/k}+H}|R_{S}(F(n))-F(n)\mathfrak S_{S} (F(n))|^2\\
&=& \sum_{N^{1/k}\le n \le N^{1/k}+H}\left| \int_{\mathfrak M}f_S(\alpha)^2e(-F(n)\alpha)d\alpha+\int_{\mathfrak m}f_S(\alpha)^2e(-F(n)\alpha)d\alpha -F(n)\mathfrak S_S (F(n))\right|^2\\
&\le&\sum_{N^{1/k}\le n \le N^{1/k}+H}\left| \int_{\mathfrak M}f_S(\alpha)^2e(-F(n)\alpha)d\alpha -F(n)\mathfrak S_S(F(n))\right|^2\\
&+&\sum_{N^{1/k}\le n \le N^{1/k}+H}\left|\int_{\mathfrak m}f_S(\alpha)^2e(-F(n)\alpha)d\alpha\right|^2 =\sum_{\mathfrak M} + \sum_{\mathfrak m}.
\end{eqnarray*}

From Theorem~\ref{major} given in the following section, we have $$\sum_{\mathfrak M}\ll HN^2L^{-2B+c}+HN^2L^{-A},$$ where $c>0$ is a suitable constant depending on $m$, $F$ and $N$. From Theorem~\ref{minor}, we have that $$\sum_{\mathfrak m}\ll  HN^2L^{-B}$$ for sufficiently large $B$ depending on $k$.

Hence, by choosing $B$ to be sufficiently large in terms of $A$ and $k$, we can absorb everything into the $HN^2L^{-A}$ term, proving the theorem. 
\end{proof}

From this we can prove the following corollary.

\begin{corollary}~\label{cor2}

Let $A,\epsilon>0$, and let $H$ such that $N^{1/(3k)+\epsilon}\le H \le N^{1/k-\epsilon}$.   Then for almost all $n\in[N^{1/k},N^{1/k}+H]$,  $$R_S(F(n))=F(n)\mathfrak S_S(F(n))+O(NL^{-A}),$$ with $O(HL^{-A})$ exceptions. 

In particular, if $m$ is even, then  for almost all $n\in[N^{1/k},N^{1/k}+H]$ such that $F(n)\equiv s_1+s_2 \bmod m$, we have that $F(n)$ is the sum of two primes congruent to $s_1$ and $s_2$ mod $m$ respectively, with $O(HL^{-A})$ exceptions. 

\end{corollary}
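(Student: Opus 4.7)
The plan is to deduce Corollary~\ref{cor2} from Theorem~\ref{estimate} by a standard Chebyshev-type argument, and then to extract the existence of prime-pair representations using a positivity lower bound on the singular series $\mathfrak S_S(F(n))$.

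For the first assertion, I would apply Theorem~\ref{estimate} with the parameter $A$ replaced by $3A$, giving
$$\sum_{N^{1/k}\le n \le N^{1/k}+H}|R_{S}(F(n))-F(n)\mathfrak S_{S}(F(n))|^2\ll HN^2L^{-3A}.$$
The set of $n\in[N^{1/k},N^{1/k}+H]$ at which $|R_S(F(n))-F(n)\mathfrak S_S(F(n))|>NL^{-A}$ then has size at most $HN^2L^{-3A}/(NL^{-A})^2=HL^{-A}$ by Chebyshev's inequality, which is precisely the claimed exception bound.

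For the second assertion, fix such a non-exceptional $n$ with $F(n)\equiv s_1+s_2\bmod m$. First I would note that the integral $R_S(F(n))$ a priori counts pairs $(p_1,p_2)$ with each $p_i\in\{s_1,s_2\}\bmod m$, but the residue condition on $F(n)$ automatically excludes the unwanted pairs: if $s_1\not\equiv s_2\bmod m$ then $2s_i\not\equiv s_1+s_2\bmod m$, so the only contributing pairs are those with one prime in each prescribed class (the case $s_1\equiv s_2$ being trivial). Hence it suffices to show $R_S(F(n))>0$ to produce a representation of the desired form.

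The main obstacle is establishing a uniform lower bound $\mathfrak S_S(F(n))\gg 1$. I would expand $\mathfrak S_S$ as an Euler product whose local factor at each prime $q$ measures the density of unit solutions to $p_1+p_2\equiv F(n)\bmod q^r$ with $p_i\equiv s_i$ on the relevant residues. At primes $q\nmid m$ the factor reduces to the classical Goldbach-type local density and is bounded away from zero; at primes $q\mid m$ the hypothesis $F(n)\equiv s_1+s_2\bmod m$ guarantees local solvability. Crucially, the evenness of $m$ forces $s_1,s_2$ to be odd and $F(n)$ to be even, resolving the $2$-adic parity obstruction that would otherwise kill the singular series (the analogue of the familiar obstruction in the classical Goldbach problem). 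Combining this lower bound with $F(n)\asymp N$ yields $F(n)\mathfrak S_S(F(n))\gg N$, which dominates the error $O(NL^{-A})$ for sufficiently large $A$, so $R_S(F(n))>0$ for every non-exceptional $n$ and the desired representation exists.
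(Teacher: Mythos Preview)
Your proposal is correct and follows essentially the same route as the paper: a Chebyshev-type counting argument applied to Theorem~\ref{estimate} to bound the exceptional set, followed by the positivity lower bound $\mathfrak S_S(F(n))\gg 1$ (which the paper packages as Lemma~\ref{prod}) together with $F(n)\asymp N$ to force $R_S(F(n))>0$ off the exceptional set. Your write-up is in fact more explicit than the paper's, which is quite terse here; in particular you spell out the congruence argument excluding the $(s_1,s_1)$ and $(s_2,s_2)$ pairs, which the paper only alludes to earlier in the Overview, and you sketch the Euler-product reasoning behind the singular-series lower bound rather than just citing the lemma. One small wording issue: at the end you say the main term dominates the error ``for sufficiently large $A$'', but $A$ is fixed in the hypothesis; the correct phrasing is that $F(n)\mathfrak S_S(F(n))\gg N$ dominates $O(NL^{-A})$ for $N$ sufficiently large, since $L^{-A}\to 0$.
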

\begin{proof}
This follows from the application of Cauchy-Schwarz to the result of Theorem~\ref{cor1}, and by noting that by Lemma~\ref{prod}, if $m$ is even, $\mathfrak S_S(F(n))=0$ unless $F(n)\equiv s_1+s_2\bmod m$.

\end{proof}

As a special case, we obtain the following theorem. 

\begin{theorem}~\label{cor1}
Let $F, s_1, s_2$, and $m$ be as in in Theorem~\ref{estimate}, with $m$ even, and suppose that $F(n)$ takes on a value congruent to $s_1+s_2 \bmod m$. Then there are infinitely many pairs of primes congruent to $s_1$ and $s_2$ modulo $m$, which sum to $F(n)$ for some $n$.
\end{theorem}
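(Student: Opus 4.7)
The plan is to deduce Theorem~\ref{cor1} as a direct consequence of Corollary~\ref{cor2} together with a simple density argument on arithmetic progressions of $n$. By hypothesis there is some $n_0$ with $F(n_0)\equiv s_1+s_2\pmod{m}$, and since $F\in\mathbb{Z}[x]$ we have $F(n)\equiv s_1+s_2\pmod{m}$ for every $n\equiv n_0\pmod{m}$. Thus the arithmetic progression $\{n\equiv n_0\pmod{m}\}\cap[N^{1/k},N^{1/k}+H]$ contains $\asymp H/m$ integers, each already satisfying the congruence condition needed to invoke Corollary~\ref{cor2}.

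Next I would apply Corollary~\ref{cor2} with a fixed $A>1$. It asserts that all but $O(HL^{-A})$ values of $n$ in this progression yield a representation $F(n)=p_1+p_2$ with $p_1,p_2$ primes each lying in $\{s_1,s_2\}\bmod m$. Since $H/m$ comfortably dominates $HL^{-A}$ once $N$ is large, at least one admissible $n$ exists. To ensure the two primes split as ``one $\equiv s_1$, the other $\equiv s_2$'' rather than both lying in the same class, I would use the congruence of $F(n)$: if both primes were congruent to $s_1\pmod{m}$, then $F(n)\equiv 2s_1\pmod{m}$, which together with $F(n)\equiv s_1+s_2\pmod{m}$ forces $s_1\equiv s_2\pmod{m}$, in which case the conclusion holds trivially. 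This is the ``basic congruence argument'' alluded to in the overview of the circle method section.

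Finally, to upgrade ``at least one pair for each large $N$'' to ``infinitely many pairs'', I would let $N\to\infty$. Since the $n$ produced satisfy $n\ge N^{1/k}$, both $n$ and $F(n)$ tend to infinity, forcing the primes $p_1,p_2$ to tend to infinity as well; hence distinct choices of $N$ yield genuinely new pairs. The main obstacle is not in this extraction itself---all of the analytic difficulty was already absorbed into Theorem~\ref{estimate} and Corollary~\ref{cor2}. The only point requiring care here is confirming that the progression of admissible $n$ is thick enough to beat the exceptional set, and this is immediate from $H/m\gg HL^{-A}$ for any fixed $A$.
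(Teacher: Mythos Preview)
Your proposal is correct and follows essentially the same route as the paper: use the hypothesis to get an arithmetic progression of $n$ with $F(n)\equiv s_1+s_2\pmod m$, observe this gives $\asymp H/m$ admissible $n$ per interval, apply Corollary~\ref{cor2} to beat the $O(HL^{-A})$ exceptional set, and then let $N\to\infty$ through disjoint intervals. You additionally spell out the ``basic congruence argument'' ruling out both primes lying in the same residue class, which the paper alludes to in the overview but does not make explicit here; this is a welcome clarification rather than a departure.
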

\begin{proof}

Since $F$ takes on a value congruent to $s_1+s_2 \bmod m$, there are at least $\frac{H}{m}+O(1)$ values in an interval of size $H$ for which $F$ takes on such a value. Hence, the theorem follows from  Corollary~\ref{cor2}, by taking disjoint intervals with larger and larger $N$.

\end{proof}

This will now give us a proof of our main theorem.
\begin{theorem}
There are infinitely many imaginary quadratic fields with 2-class group of type $(2,2^\ell)$, for any positive integer $\ell$. 

In particular, if $\text{Cl}_2(-d)$ is the 2-class group of $\mathbb Q(\sqrt{-d})$ and $\ell>1$, then
$$\left|\left\{ d\le X, \text{Cl}_2(-d)\cong(2,2^\ell)\right\}\right|\gg \frac{X^{\frac{1}{2}+\frac{1}{2\cdot2^\ell}}}{\log^2 X}.$$

\end{theorem}

\begin{proof}

For $\ell=1$, this was proven in Proposition \ref{k1}. 

By Proposition~\ref{alg}, we have that
$$\left|\left\{ d\le X: \text{Cl}_2(-d)\cong(2,2^\ell)\right\}\right|
\gg|\{p_1,p_2\le X^{1/2}:  p_1+p_2=F(n), p_1\equiv 7\bmod 24, p_2\equiv 11 \bmod 24\}|,
$$
where $F(x)=2(3(2x+1)^2)^{2^{\ell-1}}.$

Since $F(n)\equiv 18\bmod 24$ for all $n\in\mathbb Z$, $R_S(F(n))$ counts such primes, so for some constant $c>0$, $$|\{p_1,p_2\le X^{1/2}:  p_1+p_2=F(n), p_1\equiv 7\bmod 24, p_2\equiv 11 \bmod 24\}|\gg \log^{-2} (X) \sum_{n\le cX^{1/2k}} R_{S}(F(n)).$$

Let $m=24$, $s_1=7$, and $s_2=11$. By combining Corollary~\ref{cor2}, 

with the fact that $F(n)\gg N$ for $n\ge N^{1/k}$, and $\mathfrak S_{S}(F(n))\gg 1$ by Lemma~\ref{prod},  we get that 
$$\sum_{N^{1/k}\le n \le N^{1/k}+H}R_S(F(n))\gg HN $$ Choose $\epsilon$ to be small enough so that $H=N^{1/k-\epsilon}\ge N^{1/k}-1$ for $N\le X$. 
Then summing over all intervals from $N^{1/k}/2^{i+1}$ to $N^{1/k}/2^{i}$ where $i$ ranges from 0, up to the log base 2 of $N^{1/k}$. 
$$\sum_{ n \le N^{1/k}}R_{S}(F(n) \gg N^{k+1}$$

Thus, we find that $$\sum_{n\le cX^{1/2k}} R_{S}(F(n))\gg X^{\frac{1+k}{2k}}.$$ The theorem then follows from the fact that the degree of $F(x)$ is $2^\ell$.

\end{proof}

We remark that after using the results of Dominguez, Miller and Wong~\cite{DMW}, we can apply the same method of proof as above to get a lower bound for the cyclic 2-class groups in their paper. For $\ell>1$, $$\left|\left\{ d\le X, \text{Cl}_2(-d)\cong(2^\ell)\right\}\right|\gg \frac{X^{\frac{1}{2}+\frac{1}{2\cdot2^\ell}}}{\log^2 X}.$$ 

\subsection{Major arcs}

Our goal here is to estimate $$\sum_{\mathfrak M}=\sum_{N^{1/k}\le n \le N^{1/k}+H}\left| \int_{\mathfrak M}f_S(\alpha)^2e(-F(n)\alpha)d\alpha -F(n)\mathfrak S_{S} (F(n))\right|^2$$

Throughout, we will let $q=q_0d$ such that $(q,m)=d$. To  keep things from getting too cumbersome, we will let $h=F(n)$, and define two functions $$v(\beta)=\sum_{n=1}^{c_1N} e(\beta n)\text{ and  } \mu_s(q,a)=\sum_{\substack{r=1 \\ (r,q)=1 \\ r\equiv s \bmod d}}^qe\left(\frac{ar}{q}\right).$$ Note that $h= O(N)$. We will also use the Iverson bracket, defined by
$$\displaystyle[P]=\begin{cases}1\text{ if P is true}\\0\text{ if P is false.}\end{cases}$$

We now prove a lemma which gives a good estimate for $f_s(\alpha)$.

\begin{lemma}\label{est1}

If $1\le a \le q$, and $\alpha\in\mathfrak M'(q,a)$, then there is a positive constant $C$ such that $$f_s\left(\alpha\right)=\frac{v(\alpha-a/q)\mu_s(q,a)}{\phi(q_0m)}+ O(N \exp(-CL^{1/2})).$$

\end{lemma}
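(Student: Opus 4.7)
The plan is to compute $f_s(\alpha)$ on a major arc by extracting the phase contribution from the denominator $q$ via the standard residue-class decomposition, and then replacing the resulting inner sum over primes in an arithmetic progression by its expected main term via Siegel--Walfisz.

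First I would write $\alpha=a/q+\beta$ with $|\beta|\le PN^{-1}$, so that $e(\alpha p)=e(ap/q)e(\beta p)$. Grouping the primes $p\le c_1 N$ with $p\equiv s\bmod m$ by their residue $r\bmod q$, the primes with $(r,q)>1$ contribute $O(\log q)=O(\log P)$ and can be discarded. For the remaining primes we need $(r,q)=1$ and, for compatibility with $p\equiv s\bmod m$, also $r\equiv s\bmod d$ where $d=\gcd(q,m)$. This yields
$$f_s(\alpha)=\sum_{\substack{1\le r\le q\\ (r,q)=1\\ r\equiv s\bmod d}}e\!\left(\tfrac{ar}{q}\right)\sum_{\substack{p\le c_1 N\\ p\equiv r\bmod q\\ p\equiv s\bmod m}}(\log p)\,e(\beta p)+O(\log P).$$
By the Chinese Remainder Theorem, the two congruences on $p$ are equivalent to a single congruence $p\equiv t_r\bmod{q_0 m}$ with $(t_r,q_0 m)=1$, since $\text{lcm}(q,m)=q_0 m$.

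Next I would invoke Siegel--Walfisz on each inner sum. Since $q_0 m\le mP=mL^B$, for any fixed $A'>0$ we have uniformly
$$\theta(x;q_0 m,t_r)=\frac{x}{\phi(q_0 m)}+O\!\bigl(x\exp(-c\sqrt L)\bigr).$$
Abel summation against the smooth weight $e(\beta u)$ then gives
$$\sum_{\substack{p\le c_1 N\\ p\equiv t_r\bmod{q_0 m}}}(\log p)\,e(\beta p)=\frac{v(\beta)}{\phi(q_0 m)}+O\!\bigl(N(1+|\beta|N)\exp(-c\sqrt L)\bigr),$$
where converting the integral $\int_0^{c_1 N}e(\beta u)\,du$ to $v(\beta)$ costs at most $O(1+|\beta|N)$. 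Because $1+|\beta|N\le 1+P=O(L^B)$ is polynomial in $L$, this factor is absorbed by slightly decreasing the constant in the exponential. Summing over the permitted residues $r$ then reconstitutes the main term $v(\beta)\mu_s(q,a)/\phi(q_0 m)$, and the accumulated error is at most $\phi(q)\cdot N\exp(-c'\sqrt L)\ll P\cdot N\exp(-c'\sqrt L)\ll N\exp(-C L^{1/2})$ for a suitable $C>0$.

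The main obstacle is guaranteeing uniformity of the Siegel--Walfisz error across all moduli $q_0 m$ that arise as $q$ ranges over $[1,P]$; this is exactly the content of the standard Siegel--Walfisz theorem provided one accepts the (ineffective) dependence on the exponent $B$ in $P=L^B$. The only other care point is checking that the extra factor $1+|\beta|N$ from the Abel-summation and integral-to-sum conversion remains polynomial in $L$, which is immediate from $|\beta|\le PN^{-1}$ on the major arc.
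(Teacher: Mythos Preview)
Your proof is correct and follows essentially the same approach as the paper: decompose by residue classes modulo $q$, combine the two congruences via CRT into a single progression modulo $q_0 m$, apply Siegel--Walfisz, and sum over residues to produce $\mu_s(q,a)/\phi(q_0 m)$. The only difference is that the paper first treats the rational point $a/q$ and then defers the extension to general $\alpha\in\mathfrak M'(q,a)$ to Vaughan \cite[\S3]{V}, whereas you carry out that extension explicitly via Abel summation and check that the extra factor $1+|\beta|N\ll L^B$ is absorbed into the exponential.
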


\begin{proof}

We start by considering $f_s$ at rational points, and notice that
 $$f_s\left(\frac{a}{q}\right)=\sum_{\substack{r=1 \\ (r,q)=1}}^qe\left(\frac{ar}{q}\right)\vartheta_s(c_1N,q,r)+O(L(\log q)),$$
 
 where $$\vartheta_s(x,q,r)=\sum_{\substack{p\le x \\ p\equiv r \bmod q \\ p\equiv s \bmod m}}\log p$$ is a sum over primes $p$. We can apply Siegel-Walfisz~\cite{SW} to discover that

\begin{eqnarray*}
\vartheta_s(x,q,r)=[s\equiv r \bmod d]\sum_{\substack{p\le x \\ p \equiv r' \bmod \frac{qm}{d}}}\log p
=\frac{x}{\phi(q_0m)}[s\equiv r \bmod d]+ O(N \exp(-C_1 L^{1/2})),
\end{eqnarray*}
for a constant $C_1$ from the Siegel-Walfisz theorem, and where $r'$ is some number coming from the Chinese Remainder Theorem. Thus,
\begin{eqnarray*}f_s\left(\frac{a}{q}\right)&=&\frac{c_1N}{\phi(q_0m)}\sum_{\substack{r=1 \\ (r,q)=1 \\ r\equiv s \bmod d}}^qe\left(\frac{ar}{q}\right)+ O(N \exp(-C_1 L^{1/2}))\\
&=&\frac{c_1N\mu_s(q,a)}{\phi(q_0m)}+ O(N \exp(-C_1 L^{1/2}))
\end{eqnarray*} Following Vaughan~\cite[\S 3]{V}, we can extend this to a general $\alpha\in\mathfrak M'(q,a)$, which gives us the desired result.

\end{proof}

We now define our singular series, the finite version $$\mathfrak S_{s_1s_2}(h;P)=\sum_{q=1}^P \sum_{\substack{a=1 \\ (a,q)=1}}^q \frac{ \mu_{s_1}(q,a)\mu_{s_2}(q,a)}{\phi(q_0m)^2} e\left(\frac{-ah}{q}\right),$$ and its infinite limit $$\mathfrak S_{s_1s_2}(h)=\sum_{q=1}^\infty \sum_{\substack{a=1 \\ (a,q)=1}}^q \frac{ \mu_{s_1}(q,a)\mu_{s_2}(q,a)}{\phi(q_0m)^2} e\left(\frac{-ah}{q}\right).$$

In Lemma~\ref{convergence} we will prove that the infinite singular series converges, and in Lemma~\ref{prod} we will find a product expansion for it. But first, we will need the following lemma about $\mu_s(q,a)$.

\begin{lemma}\label{mu}

Let $(q,a)=1$. Then,
$$\mu_s(q,a)=\sum_{\substack{r=1 \\ (r,q)=1 \\ r\equiv s \bmod d}}^qe\left(\frac{ar}{q}\right)=\mu(q_0)e\left(\frac{as}{q}\right)e\left(\frac{as'}{q_0}\right)[(q_0,m)=1]$$

where $s'$ is chosen such that $s'd\equiv -s \bmod p_i$ for every prime $p_i$ dividing $q$, but not dividing $d$.
\end{lemma}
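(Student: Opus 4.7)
My plan is to parametrize the sum by writing $r = s + td$ for $t = 0, 1, \ldots, q_0 - 1$, which traverses exactly the residues modulo $q = q_0 d$ that are $\equiv s \bmod d$. This immediately pulls $e(as/q)$ out, leaving
$$\mu_s(q,a) = e\!\left(\frac{as}{q}\right) \sum_{t=0}^{q_0-1} \bigl[(s+td,\, q) = 1\bigr]\, e\!\left(\frac{at}{q_0}\right).$$
I would then analyze the coprimality condition prime by prime: at any $p \mid d$ we have $s + td \equiv s \bmod p$ with $p \nmid s$ (since $d \mid m$ and $(s, m) = 1$), so the condition is automatic there. The only nontrivial constraints come from primes $p \mid q$ with $p \nmid d$, all of which divide $q_0$. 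Accordingly, split $q_0 = q_1 q_2$ where $q_1$ collects the prime-power parts of $q_0$ at primes \emph{not} dividing $d$ and $q_2$ collects those at primes dividing $d$; then $(q_1, q_2) = 1$ and the constraint reduces to $(s + td, q_1) = 1$, which depends on $t$ only modulo $q_1$.

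Next I would apply CRT to $q_0 = q_1 q_2$ to factor the inner sum as a product over $t_1 \bmod q_1$ and $t_2 \bmod q_2$. Writing $e(at/q_0) = e(a_1 t_1/q_1) e(a_2 t_2/q_2)$ with $(a_i, q_i) = 1$ inherited from $(a, q) = 1$, the $t_2$-sum is the full geometric sum $\sum_{t_2=0}^{q_2-1} e(a_2 t_2/q_2)$, which vanishes unless $q_2 = 1$. A short check shows $q_2 > 1$ iff some prime divides both $q_0$ and $d$; using $d \mid m$ together with $d = (q, m)$, this is equivalent to $(q_0, m) > 1$. This accounts for the Iverson factor $[(q_0, m) = 1]$.

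In the surviving case $(q_0, m) = 1$ we have $q_1 = q_0$ and $(d, q_0) = 1$, so $u \equiv s + td \bmod q_0$ is a bijection on residues mod $q_0$. Substituting $t \equiv (u-s) d^{-1}$ reduces the inner sum to
$$e\!\left(\frac{-a s d^{-1}}{q_0}\right) \sum_{\substack{u=1 \\ (u, q_0) = 1}}^{q_0} e\!\left(\frac{a d^{-1} u}{q_0}\right) = e\!\left(\frac{-a s d^{-1}}{q_0}\right) c_{q_0}(a d^{-1}),$$
where $d^{-1}$ is the inverse of $d$ modulo $q_0$. Since $(a d^{-1}, q_0) = 1$, the classical Ramanujan evaluation gives $c_{q_0}(a d^{-1}) = \mu(q_0)$. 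Defining $s' \equiv -s d^{-1} \bmod q_0$, equivalently $s' d \equiv -s \bmod p_i$ for each prime $p_i \mid q_0$ (which, under $(q_0, m) = 1$, is exactly the set of primes dividing $q$ but not $d$; the prime-by-prime characterization is legitimate because $\mu(q_0) \neq 0$ forces $q_0$ squarefree), assembles the factors into the claimed product. The main bookkeeping subtlety is verifying that the vanishing criterion $q_2 = 1$ coincides with $(q_0, m) = 1$; once that equivalence is pinned down, the Ramanujan-sum evaluation is standard.
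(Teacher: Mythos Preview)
Your proof is correct. Both you and the paper begin with the same key substitution $r=s+td$ (with $t$ ranging over $\mathbb Z/q_0$), which pulls out the factor $e(as/q)$ and reduces the coprimality condition to the primes of $q$ not dividing $d$. From that point the executions differ slightly: the paper first reduces to $a=1$ via the observation $\mu_s(q,a)=\mu_{as}(q,1)$ and then applies inclusion--exclusion over the primes $p_i\mid q$, $p_i\nmid d$, evaluating each resulting geometric sum by hand to see that only the full subset survives (forcing $q_0$ to equal the squarefree product $\rho=p_1\cdots p_\gamma$). You instead use the CRT splitting $q_0=q_1q_2$ to isolate the vanishing as a bare geometric sum over $t_2$, and then recognize the remaining $t$-sum, after the change of variable $u\equiv s+td$, as a Ramanujan sum $c_{q_0}(ad^{-1})=\mu(q_0)$. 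The two routes are essentially equivalent---the Ramanujan-sum identity is inclusion--exclusion repackaged---but your version makes the appearance of $\mu(q_0)$ a one-line citation rather than a combinatorial computation, and your verification that $q_2=1$ is equivalent to $(q_0,m)=1$ is a point the paper leaves implicit in the passage from $[q_0=\rho]$ to $[(q_0,m)=1]$.
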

\begin{proof}
We first let $\mu_s(q):=\mu_s(q,1)$, and notice that $\mu_{as}(q)=\mu_s(q,a)$. 

Let $q=p_1^{e_1}\cdots p_n^{e_n}$, where $e_i\ge1$ for all $i$. Also let $0\le b_i\le e_i$ such that $d=p_1^{b_1}\cdots p_n^{b_n}$. 

Arrange (WLOG) the primes so that the first $\gamma$ of the $p_i$'s are the primes not dividing $d$ (i.e. $b_i=0$ for $i\le\gamma$).We let $d_i^{-1}$ such that $d_i^{-1}d\equiv 1 \bmod p_i$, and 

By the inclusion-exclusion principle, we have the following.

$$\mu_s(q)=\sum_{\substack{r=1\\ r\equiv s \bmod d}}^q e\left(\frac{r}{q}\right)+\sum_{k=1}^\gamma(-1)^k\left[\sum_{1\le i_1<\cdots<i_k\le\gamma}\sum_{\substack{t=1\\ t\equiv -sd_{i_j}^{-1} \left({p_{i_j}}\right)}}^{q_0} e\left(\frac{td+s}{q}\right)\right].$$ 

By the Chinese Remainder Theorem, we can combine the congruence conditions in the innermost sum into one, $t\equiv s'[i_1,\cdots, i_k] \bmod p_{i_1}\cdots p_{i_k}$, for some $s'[i_1,\cdots,i_k]$.   

So, the innermost sum can be evaluated as
\begin{eqnarray*}\sum_{\substack{t=1\\ t\equiv s'[i_1,\cdots,i_k] \left({p_{i_1}\cdots p_{i_k}}\right)}}^{q_0} e\left(\frac{td+s}{q}\right)&=&e\left(\frac{s}{q}\right)\sum_{u=1}^{q_0/(p_{i_1}\cdots p_{i_k})}e\left(\frac{up_{i_1}\cdots p_{i_k}+s'[i_1,\cdots,i_k]}{q_0}\right)\\
&=&e\left(\frac{s}{q}\right)e\left(\frac{s'[i_1,\cdots,i_k]}{q_0}\right)[q_0/(p_{i_1}\cdots p_{i_k})=1]. 
\end{eqnarray*}

Hence, the entire sum is $$\mu_s(q)=\mu(\rho)e\left(\frac{s}{q}\right)e\left(\frac{s'[1,\cdots,\gamma]}{q_0}\right)[q_0=\rho],$$ where $\rho=p_1\cdots p_\gamma$. The theorem then follows.

\end{proof}

Using the previous lemma, we may now prove that the singular series converges, and has a nice expression in terms of multiplicative functions. 

\begin{lemma}\label{convergence}

The sum $\mathfrak S_{s_1s_2}(h)$ converges, and  $$\mathfrak S_{s_1s_2}(h)-\mathfrak S_{s_1s_2}(h;P)\ll \frac{h\tau(h)}{P\phi(h)},$$ where $\tau$ is the sum of divisors function. Furthermore, $$\mathfrak S_{s_1s_2}(h)=\sum_{\substack{q=1\\ (q_0,m)=1}}^\infty \frac{\mu(q_0)^2c_{q_0}(h)c_d(s_1+s_2-h)}{\phi(q_0m)^2},$$ and $\mathfrak S_{s_1s_2}(h;P)\ll L$.

\end{lemma}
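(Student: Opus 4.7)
My plan is to first establish the stated product-expansion formula by substituting Lemma~\ref{mu} into the definition of $\mathfrak S_{s_1s_2}(h;P)$ and performing a Chinese Remainder decomposition of the inner $a$-sum into Ramanujan sums; convergence, the tail bound, and the partial-sum bound then all follow from standard estimates on Ramanujan sums $c_n(k)=\sum_{(a,n)=1} e(ak/n)$.

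For the formula, Lemma~\ref{mu} gives
$$\mu_{s_1}(q,a)\mu_{s_2}(q,a) = \mu(q_0)^2 \, e\!\left(\tfrac{a(s_1+s_2)}{q}\right) e\!\left(\tfrac{a(s_1'+s_2')}{q_0}\right) [(q_0,m)=1],$$
with $s_i' d \equiv -s_i \pmod{q_0}$ since the relevant primes are exactly those dividing $q_0$. Since $d\mid m$, the Iverson factor $[(q_0,m)=1]$ forces $(q_0,d)=1$, so $(\mathbb Z/q)^\times \cong (\mathbb Z/q_0)^\times \times (\mathbb Z/d)^\times$ via the Chinese Remainder Theorem. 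Using the Bezout identity $\tfrac{1}{q_0 d} \equiv \tfrac{\bar d}{q_0} + \tfrac{\bar q_0}{d} \pmod 1$ (where $\bar d d \equiv 1 \pmod{q_0}$ and $\bar q_0 q_0 \equiv 1 \pmod d$), the inner sum factors as
$$c_{q_0}\bigl((s_1+s_2-h)\bar d + s_1'+s_2'\bigr)\cdot c_d\bigl((s_1+s_2-h)\bar q_0\bigr).$$
The congruence $s_1'+s_2' \equiv -(s_1+s_2)\bar d \pmod{q_0}$ collapses the first argument to $-h\bar d \bmod q_0$, and then $c_n(un)=c_n(n)$ for $(u,n)=1$ together with $c_n(-n)=c_n(n)$ reduce the expression to $c_{q_0}(h)\cdot c_d(s_1+s_2-h)$, yielding the stated formula.

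For convergence and the tail bound, I will use $|c_{q_0}(h)| \leq (q_0,h)$ and $|c_d(s_1+s_2-h)| \leq d \leq m$, so that the tail is $\ll_m \sum_{q_0 > P/m,\,\text{sqfree}} (q_0,h)/\phi(q_0)^2$. Writing squarefree $q_0 = ab$ with $a\mid h$ and $(b,ah)=1$ (so $(q_0,h)=a$), the standard estimate $\sum_{b > X} 1/\phi(b)^2 \ll 1/X$, the inequality $a/\phi(a) \leq h/\phi(h)$ for $a\mid h$, and $\sum_{a\mid h} a = \tau(h)$ together yield the claimed bound $h\tau(h)/(P\phi(h))$, from which convergence follows. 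For the partial-sum bound, extending to the infinite sum and using multiplicativity gives
$$\mathfrak S_{s_1s_2}(h;P) \ll \prod_{p \nmid h}\!\left(1 + \tfrac{1}{(p-1)^2}\right) \prod_{p \mid h}\!\left(1 + \tfrac{p}{(p-1)^2}\right);$$
the first product is a constant and the second is $\ll (h/\phi(h))^2 \ll (\log\log h)^2 \ll L$, since $h = O(N)$.

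The main technical obstacle is the bookkeeping in the formula step: one has to carefully track how the congruences from Lemma~\ref{mu} interact with the CRT decomposition so that the $s_1', s_2'$ contributions cancel cleanly against the $(s_1+s_2)$ ones, producing a Ramanujan sum in $h$ alone rather than in some twist of $h$ by $s_1, s_2$.
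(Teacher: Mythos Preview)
Your proposal is correct and follows essentially the same strategy as the paper: substitute Lemma~\ref{mu} to reduce the inner $a$-sum to Ramanujan sums, then control the tail via standard bounds on $c_q(h)$. The only noteworthy differences are cosmetic. For the formula, the paper first recognises the full $a$-sum as the single Ramanujan sum $c_q\bigl(s_1+s_2-h+d(s_1'+s_2')\bigr)$ and then factors it as $c_{q_0}(-h)\,c_d(s_1+s_2-h)$, whereas you split via CRT before summing; these are equivalent. For the tail, the paper invokes the exact evaluation $c_q(h)=\phi(q)\mu(q/(q,h))/\phi(q/(q,h))$ rather than your cruder bound $|c_{q_0}(h)|\le (q_0,h)$, but both routes land on the same $h\tau(h)/(P\phi(h))$. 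For the partial-sum bound $\mathfrak S_{s_1s_2}(h;P)\ll L$, the paper takes the shorter route of bounding each $q$-term trivially by $1/\phi(q)$ and using $\sum_{q\le P}1/\phi(q)\ll L$, avoiding your detour through the Euler product; your argument is also fine once you replace ``extending to the infinite sum'' by ``bounding termwise in absolute value,'' since the terms are not positive.
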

\begin{proof}


We notice that this sum is similar to the Ramanujan sum $$c_q(n)=\sum_{\substack{a=1\\ (a,q)=1}}^q e\left(\frac{an}{q}\right)$$ and by Lemma~\ref{mu} we can rewrite it using Ramanujan sums:
\begin{eqnarray*}
&&\sum_{\substack{a=1 \\ (a,q)=1}}^q \mu_{s_1}(q,a)\mu_{s_2}(q,a) e\left(\frac{-ah}{q}\right)\\
&=&\sum_{\substack{a=1 \\ (a,q)=1}}^q e\left(\frac{a(s_1+s_2-h)}{q}\right)e\left(\frac{as_1'+as_2'}{q_0}\right)\mu(q_0)^2[(q_0,m)=1]\\
&=&c_q(s_1+s_2-h+d(s_1'+s_2'))\mu(q_0)^2[(q_0,m)=1]\\
&=&c_{q_0}(-h)c_d(s_1+s_2-h)\mu(q_0)^2[(q_0,m)=1],
\end{eqnarray*} so we have that 
$$\mathfrak S_{s_1s_2}(h;P)=\sum_{\substack{q=1\\ (q_0,m)=1}}^P\frac{\mu(q_0)^2c_{q_0}(-h)c_d(s_1+s_2-h)}{\phi(q_0m)}.$$

Now using the fact that $$c_q(-h)=\phi(q)\mu\left(\frac{q}{(q,h)}\right)\phi\left(\frac{q}{(q,h)}\right)^{-1},$$
we consider the following bound based on equation (3) of~\cite{P},
\begin{eqnarray*}\sum_{\substack{q>P\\ (q_0,d)=1}} \frac{\mu(q_0)^2c_{q_0}(-h)c_d(s_1+s_2-h)}{\phi(q_0m)^2}\ll\sum_{q>P} \frac{c_{q}(h)}{\phi(q)^2} \ll \sum_{q>P}\phi(q)^{-1}\phi\left(\frac{q}{(q,h)}\right)^{-1}\\
\ll\sum_{d|h}\phi(d)^{-1}\sum_{r>P/d}\phi(r)^{-2}\ll P^{-1}\sum_{d|h}\frac{d}{\phi(d)}\ll \frac{h\tau(h)}{P\phi(h)}.
\end{eqnarray*}  This proves that $$\mathfrak S_{s_1s_2}(h)=\sum_{\substack{q=1\\ (q_0,m)=1}}^\infty \frac{\mu(q_0)^2c_{q_0}(h)c_d(s_1+s_2-h)}{\phi(q_0m)^2}$$ converges.

Finally, from our above expression $\mathfrak S_{s_1s_2}(h;P)\ll \sum_{q=1}^P \frac{1}{\phi(q)}\ll L.$

\end{proof}

From this lemma, we may now prove our main theorem concerning the major arcs.

\begin{theorem}\label{major}
Let $q=q_0d$ such that $(q,m)=d$, and let $A,\epsilon\ge 0$, and $H$ such that $N^{1/(3k)+\epsilon}\le H \le N^{1/k-\epsilon}$, then there is some constant $c>0$ such that
\begin{eqnarray*}
\sum_{\mathfrak M}&=&\sum_{N^{1/k}\le n \le N^{1/k}+H}\left| \int_{\mathfrak M}f_S(\alpha)^2e(-h\alpha)d\alpha -h\mathfrak S_{S} (h)\right|^2\\
&\ll& HN^2L^{-2B+c}+HN^2L^A,
\end{eqnarray*}
where $$\mathfrak S_{S}(h)=\mathfrak S_{s_1s_1}(h)+2\mathfrak S_{s_1s_2}(h)+\mathfrak S_{s_2s_2}(h).$$

\end{theorem}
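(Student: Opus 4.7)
The approach is to substitute the main-term approximation of Lemma~\ref{est1} for $f_S$ on each major arc, factoring the resulting integral into a $\beta$-integral times a trigonometric sum in $q,a$; the latter is the truncated singular series $\mathfrak S_S(h;P)$, which Lemma~\ref{convergence} lets me replace by $\mathfrak S_S(h)$. Three error sources must be controlled: (i) the Siegel--Walfisz error in Lemma~\ref{est1}; (ii) the truncation of $\int v(\beta)^2 e(-h\beta)\,d\beta$ from $[-1/2,1/2]$ down to $|\beta|\le P/N$; and (iii) the tail $\mathfrak S_S(h) - \mathfrak S_S(h;P)$.

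Writing $W(q,a) := \mu_{s_1}(q,a)+\mu_{s_2}(q,a)$ and parametrizing $\alpha = a/q+\beta$ on $\mathfrak M'(q,a)$ with $|\beta|\le P/N$, Lemma~\ref{est1} gives
$$f_S(\alpha)^2 = \frac{v(\beta)^2 W(q,a)^2}{\phi(q_0 m)^2} + O\!\bigl(N\exp(-CL^{1/2})(N+|v(\beta)|)\bigr).$$
Using $\int_{|\beta|\le P/N}|v(\beta)|\,d\beta \ll \log P$ and summing over the $O(P^2)$ major arcs, the error contributes $\ll N\exp(-\tfrac{C}{2}L^{1/2})$ pointwise, hence $\ll HN^2 L^{-A}$ after squaring and summing over $n$. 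The surviving main piece is
$$M(n) = \Bigl(\int_{|\beta|\le P/N} v(\beta)^2 e(-h\beta)\,d\beta\Bigr)\cdot\mathfrak S_S(h;P),$$
where the expansion $W(q,a)^2 = \mu_{s_1}^2+2\mu_{s_1}\mu_{s_2}+\mu_{s_2}^2$ reproduces the three components of $\mathfrak S_S$. Since $c_1$ was chosen so that $h = F(n)\le c_1 N$, the full-interval integral $\int_{-1/2}^{1/2} v^2 e(-h\beta)\,d\beta$ equals $h-1$, and the tail beyond $P/N$ is $O(N/P)$ via $|v(\beta)|\ll |\beta|^{-1}$; combined with $\mathfrak S_S(h;P)\ll L$ from Lemma~\ref{convergence}, this gives $M(n) = h\,\mathfrak S_S(h;P) + O(NL^{1-B})$, contributing $\ll HN^2 L^{2-2B}$ after squaring and summation.

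The final and most delicate step is comparing $h\,\mathfrak S_S(h;P)$ with $h\,\mathfrak S_S(h)$. Lemma~\ref{convergence} bounds the difference by $O(h^2\tau(h)/(P\phi(h)))$, and the main obstacle is to control its mean square over the short interval of length $H$: using $h/\phi(h)\ll\log\log h\ll L$ and $h\ll N$,
$$\sum_{N^{1/k}\le n\le N^{1/k}+H}\Bigl(\frac{h^2\tau(h)}{P\phi(h)}\Bigr)^{\!2} \;\ll\; \frac{N^2 L^{O(1)}}{P^2}\sum_{n} \tau(F(n))^2.$$
A Shiu-type short-interval divisor-moment bound for polynomial values of $\tau^2$ (valid since $F$ has positive leading coefficient and the range has polynomial length $H \gg N^{1/(3k)+\epsilon}$) gives $\sum_n \tau(F(n))^2 \ll HL^{c'}$, producing the advertised $HN^2 L^{-2B+c}$ term with $c=c(F,m)$. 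Adding the three error contributions (absorbing $L^{2-2B}$ into $L^{-2B+c}$) yields the stated bound.
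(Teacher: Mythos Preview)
Your proposal is correct and follows essentially the same route as the paper: approximate $f_S$ on each major arc via Lemma~\ref{est1}, factor out the truncated singular series $\mathfrak S_S(h;P)$ times $\int v(\beta)^2 e(-h\beta)\,d\beta$, replace the latter by $h$ using the tail bound $\int_{P/N}^{1/2}|v|^2\ll N/P$, and then control the singular-series tail $h(\mathfrak S_S(h)-\mathfrak S_S(h;P))\ll h^2\tau(h)/(P\phi(h))$ in mean square over $n$ via a multiplicative-function short-interval estimate for polynomial arguments. The only cosmetic difference is that the paper invokes Nair's theorem directly on the full multiplicative weight $\bigl(\tfrac{h\tau(h)}{\phi(h)}\bigr)^2$, whereas you first strip off $h/\phi(h)\ll\log\log h$ and then apply a Shiu-type bound to $\sum_n\tau(F(n))^2$; Nair's result is precisely the polynomial-argument refinement of Shiu, so the two arguments are interchangeable.
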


\begin{proof}
Recall that $f_S(\alpha)=f_{s_1}(\alpha)+f_{s_2}(\alpha)$. We can separate the integral and its estimate into three parts, corresponding to the 3 terms in $f_S(\alpha)^2$,  
\begin{eqnarray*}\int_{\mathfrak M}\big(f_{s_1}(\alpha)^2+2f_{s_1}(\alpha)f_{s_2}(\alpha)+f_{s_2}(\alpha)^2\big)e(-h\alpha)d\alpha\\ -h\big(\mathfrak S_{s_1s_1}(h)+2\mathfrak S_{s_1s_2}(h)+\mathfrak S_{s_2s_2}(h)\big).
\end{eqnarray*} It is easier to bound these three parts separately, and we will estimate the contribution from $$\int_{\mathfrak M}f_{s_1}(\alpha)f_{s_2}(\alpha)e(-\alpha n)d\alpha$$ since the other cases follow from this one. To do this, we will mainly use Vaughan's arguments~\cite[\S 3]{V} modified appropriately in a way similar to the modifications in~\cite[\S 3.3]{DMW} and~\cite[\S 2]{P}. 

Applying Lemma~\ref{est1} to the product $f_{s_1}(\alpha)f_{s_2}(\alpha)$, we find that $$f_{s_1}(\alpha)f_{s_2}(\alpha)-\frac{\mu_{s_1}(q,a)\mu_{s_2}(q,a)}{\phi(q_0m)^2}v(\alpha-a/q)^2\ll N^2 \exp(-CL^{1/2}), $$ and integrating over $\mathfrak M$ gives us 

\begin{eqnarray*}\sum_{q\le P}\sum_{\substack{a=1\\ (a,q)=1}}^q\int_{\mathfrak M(q,a)}\left(f_{s_1}(\alpha)f_{s_2}(\alpha)-\frac{\mu_{s_1}(q,a)\mu_{s_2}(q,a)}{\phi(q_0m)^2}v(\alpha-a/q)^2\right)e(-\alpha h)\\
\ll P^3N \exp(-CL^{1/2}).
\end{eqnarray*} By definition of the major arcs, we arrange this as

$$\int_{\mathfrak M}f_{s_1}(\alpha)f_{s_2}(\alpha)e(-\alpha h)=\mathfrak S_{s_1s_2}(h,P)\int_{-P/N}^{P/N} v(\beta)^2e(-\beta h)d\beta + O(P^3N \exp(-CL^{1/2})).$$



According to Vaughan~\cite[Chapter~3]{V}, we have $$\int_{P/N}^{1/2}|v(\beta)|^2d\beta\ll P^{-1}N,$$ and by the definition of $v(\beta)$, the integral $$\int_{-1/2}^{1/2} v(\beta)^2e(-\beta h)d\beta $$ simply counts the number of solutions to $n_1+n_2=h$. Hence it is equal to $h-1$ if $h=F(n)$ is positive. This will clearly be the case if $N$ is sufficiently large, since $F(n)$ has positive leading coefficient.

Combining these with the results from Lemma~\ref{convergence} we can now use our singular series through Perelli's~\cite{P} and Vaughan's~\cite{V} arguments.  $$\int_{\mathfrak M}f_{s_1}(\alpha)f_{s_2}(\alpha)e(-\alpha h)-h\mathfrak S_{s_1s_2}(h) \ll  N\left|\frac{h\tau(h)}{P\phi(h)}\right| + NL^{1-B}+NL^{-A/2}.$$ 

Applying Nair's theorem~\cite{N}, we can bound the sum 
\begin{eqnarray*}\sum_{N^{1/k}\le n \le N^{1/k}+H}\left| \int_{\mathfrak M}f_{s_1}(\alpha)f_{s_2}(\alpha)e(-\alpha h)-h\mathfrak S_{s_1s_2}(h) \right|^2 \ll HN^2L^{-2B+c_2}+HN^2L^{-A},
\end{eqnarray*}
where $c_2>0$ is a constant depending on $m$, $F$ and $N$.


We get similar bounds for the other two cases, differing only by the constant $c_2$ in each case. Thus, we may use the triangle inequality to combine all of these together, giving us that 
\begin{eqnarray*}\sum_{\mathfrak M}=\sum_{N^{1/k}\le n \le N^{1/k}+H}\left| \int_{\mathfrak M}f_S(\alpha)^2e(-h\alpha)d\alpha -h\mathfrak S_{S} (h)\right|^2
\ll HN^2L^{-2B+c}+HN^2L^{-A},
\end{eqnarray*} for some constant $c>0$ depending on $m$, $F$ and $N$. We note that $c$ is ineffective because of the use of Siegel-Walfisz.

\end{proof}

We will also require the following important lemma, which gives a product expansion for the singular series. 

\begin{lemma}\label{prod}

We have the following product expansion 

$$\mathfrak S_{s_1s_2}(h)=[s_1+s_2\equiv h\bmod m]\frac{m}{\phi(m)^2}\prod_{\substack{p\nmid h\\ p\nmid m}}\left(1-\frac{1}{(p-1)^2}\right)\prod_{\substack{p|h\\ p\nmid m}}\left(1+\frac{1}{p-1}\right).$$

Furthermore, if $F(n)$ is an even value congruent to $s_1+s_2\bmod m$, then
 $$\mathfrak S_{S}(F(n))\gg 1.$$

\end{lemma}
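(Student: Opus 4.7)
The plan is to start from the closed form for $\mathfrak S_{s_1s_2}(h)$ provided by Lemma~\ref{convergence}, namely
$$\mathfrak S_{s_1s_2}(h)=\sum_{\substack{q=1\\ (q_0,m)=1}}^\infty \frac{\mu(q_0)^2c_{q_0}(h)c_d(s_1+s_2-h)}{\phi(q_0m)^2},$$
where the sum ranges over pairs $(q_0,d)$ with $d\mid m$, $(q_0,m)=1$, and $q=q_0d$. Because $(q_0,m)=1$, the totient factors as $\phi(q_0m)=\phi(q_0)\phi(m)$, so the sum splits as
$$\mathfrak S_{s_1s_2}(h)=\frac{1}{\phi(m)^2}\left(\sum_{d\mid m}c_d(s_1+s_2-h)\right)\sum_{\substack{q_0\ge 1\\ (q_0,m)=1}}\frac{\mu(q_0)^2c_{q_0}(h)}{\phi(q_0)^2}.$$

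First I would evaluate the inner sum over $d\mid m$ using the classical identity $\sum_{d\mid m}c_d(n)=m\cdot[m\mid n]$, which immediately produces the factor $m\cdot[s_1+s_2\equiv h\bmod m]$. Next, I would expand the $q_0$-sum as an Euler product: the presence of $\mu(q_0)^2$ restricts $q_0$ to squarefree integers, $c_{q_0}(h)$ is multiplicative in $q_0$ for fixed $h$, and $\phi$ is multiplicative, so the summand is multiplicative in $q_0$. Invoking $c_p(h)=p-1$ when $p\mid h$ and $c_p(h)=-1$ when $p\nmid h$, each Euler factor becomes $1+\tfrac{1}{p-1}$ or $1-\tfrac{1}{(p-1)^2}$ according as $p\mid h$ or $p\nmid h$. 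Combining these gives the claimed product expansion.

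For the second assertion, I would write $\mathfrak S_S(F(n))=\mathfrak S_{s_1s_1}(F(n))+2\mathfrak S_{s_1s_2}(F(n))+\mathfrak S_{s_2s_2}(F(n))$. The hypothesis $F(n)\equiv s_1+s_2\bmod m$ guarantees the Iverson bracket in $\mathfrak S_{s_1s_2}(F(n))$ is nonzero, while the brackets in the outer two terms are either $0$ or $1$, so both are nonnegative. It therefore suffices to show $\mathfrak S_{s_1s_2}(F(n))\gg 1$ uniformly. The second product $\prod_{p\mid F(n),p\nmid m}(1+1/(p-1))$ is $\ge 1$, and the first product $\prod_{p\nmid F(n),p\nmid m}(1-1/(p-1)^2)$ converges because $\sum_p 1/(p-1)^2<\infty$; here the fact that $m$ is even is crucial, since it excludes the prime $p=2$ (for which the factor $1-1/(p-1)^2$ would vanish). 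Thus the product is bounded below by an absolute positive constant, and the prefactor $m/\phi(m)^2$ is a fixed positive constant as well, yielding $\mathfrak S_S(F(n))\gg 1$.

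The main obstacle is the bookkeeping in step one: correctly reindexing the sum over $q$ as a sum over divisors $d$ of $m$ together with $q_0$ coprime to $m$, and then verifying multiplicativity to pass to the Euler product. The only substantive point, rather than a routine calculation, is the observation that the hypothesis $m$ even is exactly what is needed to keep the Euler factor at $p=2$ out of the product, and hence to keep $\mathfrak S_{s_1s_2}(F(n))$ bounded away from zero.
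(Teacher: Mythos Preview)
Your proof is correct and follows essentially the same route as the paper: start from the expression in Lemma~\ref{convergence}, separate the sum into the piece over $d\mid m$ and the piece over $q_0$ coprime to $m$, evaluate the first by $\sum_{d\mid m}c_d(n)=m\,[m\mid n]$, and expand the second as an Euler product using $c_p(h)=p-1$ or $-1$. Your treatment of $\mathfrak S_S=\mathfrak S_{s_1s_1}+2\mathfrak S_{s_1s_2}+\mathfrak S_{s_2s_2}$ in the second part is in fact slightly more explicit than the paper's, which simply observes that the condition on $F(n)$ forces nonvanishing.

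One small correction: in your last paragraph you say that $m$ being even is what excludes $p=2$ from the product $\prod_{p\nmid F(n),\,p\nmid m}(1-1/(p-1)^2)$. In fact the stated hypothesis is that $F(n)$ is \emph{even}, and it is the condition $p\nmid F(n)$---not $p\nmid m$---that already removes $p=2$ from that product (it instead lands in the other product, contributing the harmless factor $1+1/(2-1)=2$). The conclusion is unaffected, but the attribution of which hypothesis does the work should be adjusted.
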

\begin{proof}
Using the expression in Lemma~\ref{convergence}, and the multiplicative properties of the arithmetic functions involved, we get
\begin{eqnarray*}
\mathfrak S_{s_1s_2}(h)&=&\sum_{d|m}\frac{c_d(s_1+s_2-h)}{\phi(m)^2}\sum_{\substack{q=1\\ (q,m)=1}}^\infty \frac{\mu(q)^2c_{q}(h)}{\phi(q)^2}\\
&=&[s_1+s_2\equiv h\bmod m]\frac{m}{\phi(m)^2}\prod_{\substack{p\nmid h\\ p\nmid m}}\left(1-\frac{1}{(p-1)^2}\right)\prod_{\substack{p|h\\ p\nmid m}}\left(1+\frac{1}{p-1}\right).
\end{eqnarray*} In particular, note that $\mathfrak S_{s_1s_2}(h)=0$ iff $s_1+s_2\not\equiv h \bmod m$, or if $m$ and $h$ are both odd. This makes sense since we are trying to count pairs of primes congruent to $s_1$ and $s_2$ modulo $m$ that sum to $h$. But as long as it is not zero, we have that $\mathfrak S_{s_1s_2}(h)\gg 1$. Our condition on $F(n)$ forces it to not be zero, hence the lemma is proved. 

We remark that when $m=1$, this becomes the singular series for the binary Goldbach problem in~\cite{P} or~\cite{V}, and that when $m=8$, with $s_1=3$, $s_2=5$, then $\mathfrak S_S$ is the singular series in~\cite[Equation (34)]{DMW}.

\end{proof}

\subsection{Minor arcs}

Here, our goal is to bound $$\sum_{\mathfrak m}=\sum_{N^{1/k}\le n \le N^{1/k}+H}\left| \int_{\mathfrak m}f_S(\alpha)^2e(-F(n)\alpha)d\alpha\right|^2.$$ This will rely heavily on Perelli's arguments in~\cite{P} and~\cite{PP}, with the changes similar to those provided in Dominguez, Miller, and Wong~\cite[pp. 12-13]{DMW}.

Following Perelli, we let $Q'=H^k L^{-B/4}$, and $Q=\frac{Q'^{1/2}}{2}$, and we let $\mathfrak M(q,a)$ and $\mathfrak {\overline{M}}(q,a)$ be the Farey arcs with center $\frac{a}{q}$ of the Farey dissections of order $Q$ and $Q'$ respectively. 

We let $$\mathfrak{\overline{M}}=\bigcup_{q\le L^{B/4}}\bigcup_{\substack{a=1 \\ (a,q)=1}}^q\mathfrak{\overline{M}}(q,a),$$ and let $\mathfrak{\overline{ m}}= [0,1]\backslash \mathfrak{\overline{ M}}$.  


We are now ready to prove the following bound for the minor arcs. 
\begin{theorem}\label{minor} For $B>0$ large enough, 

$$\sum_{\mathfrak m} \ll HN^2L^{-B}.$$

\end{theorem}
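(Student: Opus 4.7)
The plan is to follow Perelli's treatment of the minor arcs in~\cite{P} and its refinement in Perelli--Pintz~\cite{PP}, modified to handle primes in specified residue classes modulo $m$ along the lines of Dominguez--Miller--Wong~\cite[pp.~12--13]{DMW}. The argument begins by expanding the square and interchanging the sum over $n$ with the integrations to obtain
\begin{equation*}
\sum_{\mathfrak m} = \int_{\mathfrak m}\int_{\mathfrak m} f_S(\alpha)^2 \overline{f_S(\beta)^2}\,\Phi(\beta-\alpha)\, d\alpha\, d\beta,
\end{equation*}
where $\Phi(\gamma) = \sum_{N^{1/k}\le n\le N^{1/k}+H} e(F(n)\gamma)$ is the polynomial exponential sum whose cancellation provides the bulk of the minor-arc saving.

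The next step is to split the range of $\gamma = \beta - \alpha$ according to the finer Farey dissection of order $Q'$. On the deep minor part $\overline{\mathfrak m}$, Weyl's inequality for polynomials of degree $k$ gives $|\Phi(\gamma)| \ll H^{1+\epsilon}L^{-B/2^{k+1}}$; combining this with a Vinogradov--Vaughan-type pointwise bound $\sup_{\alpha \in \mathfrak m}|f_S(\alpha)| \ll NL^{-B/8}$ yields a contribution that is $\ll HN^2L^{-B}$ once $B$ is taken large relative to $k$. On the intermediate part $\overline{\mathfrak M}$, $\Phi$ admits its standard major-arc approximation and the contribution is controlled by the product of the pointwise Vaughan bound on $f_S$ at $\alpha$ (still applicable, since $\mathfrak m$ avoids the coarser major arcs at order $P = L^B \gg L^{B/4}$) and Plancherel's identity $\int_0^1 |\Phi|^2 = H$, using Cauchy--Schwarz to couple the factors appropriately.

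The main obstacle is carrying through the Vaughan-type decomposition of $f_s(\alpha)$ uniformly in the denominator $q$ of the Farey approximation, while preserving the congruence condition $p \equiv s \bmod m$. This is handled by writing
\begin{equation*}
\mathbb{1}_{p \equiv s \bmod m} = \frac{1}{\phi(m)}\sum_{\chi\bmod m}\overline{\chi(s)}\chi(p),
\end{equation*}
and applying Vaughan's identity to each character-weighted exponential sum $\sum_{p \le c_1 N} \chi(p)\log p\cdot e(\alpha p)$. The trivial character produces the main term already captured by Lemma~\ref{est1}, while the nontrivial characters contribute a Siegel--Walfisz error of size $O(N\exp(-CL^{1/2}))$, with constants that are ineffective in $m$ but uniform in $\alpha$ and $N$. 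Once this uniform bound is in hand, the rest of the minor-arc analysis transfers from~\cite{P} with only cosmetic changes, and choosing $B$ sufficiently large in terms of $k$ absorbs all the constants into the claimed $HN^2L^{-B}$ bound.
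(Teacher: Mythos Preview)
Your overall structure---opening the square, introducing $\Phi(\gamma)=\sum_n e(F(n)\gamma)$, and splitting $\gamma=\beta-\alpha$ according to the finer Farey dissection of order $Q'$---matches Perelli and the paper. The gap is in how you handle the contribution from $\gamma\in\overline{\mathfrak M}$. A pointwise Vinogradov--Vaughan bound $\sup_{\alpha\in\mathfrak m}|f_S(\alpha)|\ll NL^{-cB}$ combined with Plancherel for $\Phi$ and Cauchy--Schwarz is not strong enough. After Perelli's reduction (his equations (5)--(14), reproduced in the paper) the problem becomes bounding $\int_{\mathfrak M''(q,a)}|f_S(\alpha)|^2\,d\alpha$ over arcs of length $\asymp 1/(qQ)$ with $Q\asymp H^{k/2}L^{-B/8}$, and one needs this integral to be $\ll NL^{-B}$. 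Using only the pointwise bound times the arc length gives at best $N^2L^{-cB}\cdot H^{-k/2}L^{B/8}$; with $H$ as small as $N^{1/(3k)+\epsilon}$ this exceeds the target by a genuine power of $N$, and no rearrangement via Cauchy--Schwarz recovers that loss.

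The paper instead obtains the required short-arc mean value by writing $f_S(a/q+\eta)$ as a main term plus a remainder $R_S(\eta,q,a)$ expressed through Dirichlet characters modulo $q$ (twisted by the characters $\chi_i$ modulo $m$ that encode the congruence condition), applying Gallagher's lemma to convert $\int|W_S(\chi,\eta)|^2\,d\eta$ into a short-interval variance of $\psi(x,\chi\chi_i)$, and then invoking the explicit formula together with the zero-density input from Perelli--Pintz~\cite{PP}. This $L^2$ machinery, not Vaughan's identity, is what drives the minor-arc bound here. Your final paragraph also conflates tools: Siegel--Walfisz (and hence Lemma~\ref{est1}) is a major-arc device valid only for moduli $q\le L^B$; it does not yield a uniform $O(N\exp(-CL^{1/2}))$ bound for nontrivial characters on $\mathfrak m$. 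On the minor arcs all characters modulo $m$ contribute at the same order and must be handled through the explicit formula for $L(s,\chi\chi_i)$, exactly as the paper does.
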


\begin{proof}




Following Perelli's arguments~\cite[Equations (5)-(11)]{P}, we find that by a variant of Weyl's inequality $$\sum_{\mathfrak m}\ll HNL^{B/2+1}\sup_{\xi\in\mathfrak m}\max_{\substack{\overline q < L^{B/4}\\ (\overline a, \overline q)=1}}\int_{{\mathfrak m}\cap(\xi+\overline{\mathfrak M}(\overline q, \overline a))} |f_S(\alpha)|^2 d\alpha + HN^2L^{-(B-4k^2-2^{k+4})/2^{k+3}}.$$ 


Manipulating the arcs as in Perelli~\cite[Equations (12)-(14)]{P} we then get $$\sup_{\xi\in\mathfrak m}\max_{\substack{\overline q \le L^{B/4}\\ (\overline a, \overline q)=1}}\int_{{\mathfrak m}\cap(\xi+\overline{\mathfrak M}(\overline q, \overline a))} |f_S(\alpha)|^2d\alpha \ll \max_{\substack{q\le Q\\ (a,q)=1}}\int_{\mathfrak M''(q,a)}|f_S(\alpha)|^2d\alpha,$$
 where $$\mathfrak M''(q,a)=\begin{cases} \mathfrak M(q,a)\backslash \mathfrak M'(q,a), \text{ if } q\le P,\\ \mathfrak M(q,a), \text{ if } P\le q \le Q.\end{cases}$$

We will now examine $f_S$ and rewrite it in terms of other functions, and center it at $\frac{a}{q}$. First, consider the Dirichlet characters with modulus $m$. There are $t=\phi(m)$ of these, and by orthogonality of characters, we can take a linear combination such that $$[n\equiv s_1,s_2\bmod m]=a_1\chi_1(n)+\cdots + a_t\chi_t(n).$$ We then write $$f_S\left(\frac{a}{q}+\eta\right)=\frac{\mu_{s_1}(q,a)+\mu_{s_2}(q,a)}{\phi(q)}T(\eta)+R(\eta,q,a),$$ where
$$T(\eta)=\sum_{n\le c_1 N} e(n\eta), \quad R_S(\eta,q,a)=\frac{1}{\phi(q)}\sum_{\chi\bmod q}\chi(a)\tau(\overline \chi)W_S(\chi,\eta)+O(N^{1/2}),$$

$$W_S(\chi,\eta)=\sum_{\substack{n\le c_1 N\\ n\equiv s_1,s_2\bmod m}}\Lambda(n)\chi(n)e(n\eta)-\sum_{i=1}^ta_i[\chi=\overline \chi_i]T(\eta),$$ and $\tau(\chi)$ is the Gauss sum for characters with conductor $q$. We note that we can lift all these characters to characters modulo $qm$, and the comparison in $W_S$ is over these lifted characters. Also, note that the difference between $\log p$ and $\Lambda(n)$ gets absorbed into the error term $O(N^{1/2})$.

By Lemma~\ref{mu}, we have that $\mu_{s_1}(q,a)+\mu_{s_2}(q,a)\le 2$, hence we have $$\int_{\mathfrak M''(q,a)}|f_S(\alpha)|^2d\alpha \ll \frac{1}{\phi(q)^2}\int_{\xi(q)}|T(\eta)|^2d\eta+\int_{\frac{-1}{qQ}}^{\frac{1}{qQ}}|R_S(\eta,q,a)|^2d\eta,$$ where $$\xi(q)=\begin{cases}\left(\frac{L^B}{N},\frac{1}{2}\right), \text{ if } q\le L^B,\\
\left(-\frac{1}{qQ},\frac{1}{qQ}\right),\text { if } L^B<q\le Q.\end{cases}$$ Using the fact that $T(\eta)$ is a geometric series, we can see  that $T(\eta)\ll \min\{N,1/||\eta||\},$ thus $$\frac{1}{\phi(q)^2}\int_{\xi(q)}|T(\eta)|^2d\eta\ll NL^{-B}.$$

Following Perelli's distinction between good and bad characters~\cite[Equations (16)-(24)]{P}, we can directly use his argument for $R$ to give us the bound
$$\int_{\frac{-1}{qQ}}^{\frac{1}{qQ}}|R_S(\eta,q,a)|^2d\eta\ll \frac{q}{\phi(q)}\sum_{\chi\text{ good}}\int_{\frac{-1}{qQ}}^{\frac{1}{qQ}}|W_S(\chi,\eta)|^2d\eta+NL^{-B}.$$ 

We now consider $W_S$ more carefully:

\begin{eqnarray*}W_S(\chi,\eta)&=&\sum_{\substack{n\le c_1 N\\ n\equiv s_1,s_2\bmod m}}\Lambda(n)\chi(n)e(n\eta)-\sum_{i=1}^ta_i[\chi=\overline \chi_i]T(\eta)\\
&=&\sum_{n\le c_1 N}\left[\Lambda(n)\chi(n)e(n\eta)(a_1\chi_1(n)+\cdots+a_t\chi_t(n))-\sum_{i=1}^ta_i[\chi=\overline \chi_i]e(n\eta)\right]\\
&=&\sum_{n\le c_1 N}\left[ \sum_{i=1}^ta_i\left(\Lambda(n)\chi(n)\chi_i(n)-[\chi=\overline\chi_i]\right)\right]e(n\eta)
\end{eqnarray*} So by Gallagher's Lemma~\cite[Lemma 1]{G}, we get 

$$\int_{\frac{-1}{qQ}}^{\frac{1}{qQ}}|W_S(\chi,\eta)|^2d\eta \ll \frac{1}{qQ^2}\int_{\frac{-qQ}{2}}^{c_1N}\left|\sum_{\substack{n=x\\ n\in[1,c_1N]}}^{x+\frac{qQ}{2}}\left(\sum_{i=1}^ta_i\left(\Lambda(n)\chi(n)\chi_i(n)-[\chi=\overline\chi_i]\right)\right)\right|^2dx.$$ Now, we have the explicit formula $$\sum_{n\le x}\Lambda(n)\chi(n)\chi_i(n) -[\chi=\overline\chi_i]x =-\sum_{|\gamma|\le c_1N}\frac{x^\rho}{\rho}+O(L^2),$$ for $4\le x\le c_1N$, and $qm\le c_1N$,  where $\rho=\beta+i\gamma$ are zeros of $L(s,\chi\chi_i)$ with $0<\beta<1$.
Using this, we argue as in Perelli and Pintz  in~\cite[Equations (22)-(26)]{PP} for their estimate of $W_2$, to get that
$$\frac{1}{qQ^2}\int_{\frac{-qQ}{2}}^{c_1N}\left|\sum_{\substack{n=x\\ n\in[1,c_1N]}}^{x+\frac{qQ}{2}}\left(\sum_{i=1}^ta_i\left(\Lambda(n)\chi(n)\chi_i(n)-[\chi=\overline\chi_i]\right)\right)\right|^2dx \ll NL^{-B},$$ where we use the inequality $|a+b|^2\ll |a|^2+|b|^2$ to handle the extra sum.

Piecing these all together, we find that by choosing $B$ sufficiently large relative to $k$, we have

$$\sum_{\mathfrak m}\ll HN^2L^{-(B-4k^2-2^{k+4})/2^{k+3}}+HN^2L^{1-B/2}\ll HN^2L^{-B}.$$
\end{proof}

\bibliography{Ref_ALopez}{}
\bibliographystyle{plain}
 \end{document}